\numberwithin{equation}{section} %式番号をセクション毎に変える; amsmath
\newtheorem{definition}{Definition}[section]
\newtheorem{proposition}[definition]{Proposition}
\newtheorem{theorem}[definition]{Theorem}
\newtheorem{lemma}[definition]{Lemma}
\newtheorem{remark}[definition]{Remark}
\DeclareMathOperator{\supp}{supp}
\DeclareMathOperator*{\esssup}{ess\,sup}
\title[Nonlinear damped wave equation]{
Existence of solutions to
the semilinear damped wave equation with
non-$L^2$ slowly decaying data :
polynomial nonlinearity case
}
\subjclass{35L71; 35A01}
\author[M. Ikeda]{Masahiro Ikeda${}^{1}$}
\thanks{${}^{1}$Graduate School of Information Science and Technology,
The University of Osaka,
1-5 Yamadaoka, 565-0871, Suita, Osaka, Japan%
}
\email{ikeda@ist.osaka-u.ac.jp/masahiro.ikeda@keio.jp/masahiro.ikeda@riken.jp}
\author[T. Inui]{Takahisa Inui${}^2$}
\thanks{
${}^2$Department of Mathematics, Graduate School of Science,
The University of Osaka,
Toyonaka, Osaka 560-0043, Japan%
}
\email{inui@math.sci.osaka-u.ac.jp}
\author[Y. Wakasugi]{Yuta Wakasugi${}^3$}
\thanks{${}^3$
Laboratory of Mathematics,
Graduate School of Advanced Science and Engineering,
Hiroshima University,
Higashi-Hiroshima, 739-8527, Japan%
}
\email{wakasugi@hiroshima-u.ac.jp}
\begin{document}

\maketitle

\begin{abstract}
We study the Cauchy problem of
the semilinear damped wave equation with polynomial nonlinearity,
and establish
the local and global existence of the solution
for slowly decaying initial data
not belonging to $L^2(\mathbb{R}^n)$
in general.
Our approach is based on
the $L^p$-$L^q$ estimates of
linear solutions
and the fractional Leibniz rule
in suitable homogeneous Besov spaces.
\end{abstract}

\section{Introduction}
We consider the Cauchy problem of
the semilinear damped wave equation
\begin{align}\label{eq:ndw}
    \left\{
    \begin{alignedat}{2}
    &\partial_t^2 u - \Delta u + \partial_t u = \mathcal{N}(u),
    &\quad&t>0, x \in \mathbb{R}^n,\\
    &u(0,x) = u_0(x), \ \partial_t u(0,x) = u_1(x),
    &\quad&x\in \mathbb{R}^n.
    \end{alignedat}
    \right.
\end{align}
Here,
$n \ge 1$,
$u = u(t,x)$
is a real-valued unknown function,
$\mathcal{N}(u)$
denotes the power nonlinearity,
and $(u_0,u_1)$ is a given initial data.

The aim of this paper is to
establish the local and global existence of 
solutions for slowly decaying initial data
not belonging to
$L^2(\mathbb{R}^n)$
in general.

After the pioneering work by
Matsumura \cite{Ma76},
the global existence and
asymptotic behavior of solutions to
the semilinear damped wave equation
\eqref{eq:ndw} have been intensively studied.
In particular, the so-called critical exponent, which is the threshold order of nonlinearity for the existence and
nonexistence of global solutions with small initial data, has been investigated.

When
$\mathcal{N}(u) = |u|^p$ with $p>1$
and the initial data
$(u_0,u_1)$ belong to
the energy space
$H^1(\mathbb{R}^n) \times L^2(\mathbb{R}^n)$
with compact support,
Todorova and Yordanov \cite{ToYo01}
and Zhang \cite{Zh01}
determined that the critical exponent
of \eqref{eq:ndw} is given by
the so-called Fujita critical exponent
$p_F = 1+2/n$
named after \cite{Fu66}.
More precisely,
if $p > p_F$
(and $p \le \frac{n}{n-2}$ if $n \ge 3$),
then the unique global solution exists for
small initial data having compact support;
if
$1 < p \le p_F$,
then there is no global solution,
provided that
$u_0+u_1 \in L^1(\mathbb{R}^n)$
and it has positive integral value.

We expect that
the compactness assumption on the support of
the initial data for the existence part
can be relaxed.
More precisely, about the decay property of
the initial data,
it seems that
$L^1(\mathbb{R}^n)$
(or slightly smaller space)
is sufficient
to obtain the global existence
in the case $p > p_F$.
In fact, Ikehata and Ohta \cite{IkOh02}
proved the small data global existence
for the initial data
$u_0 \in H^1(\mathbb{R}^n) \cap L^1(\mathbb{R}^n)$
and
$u_1 \in L^2(\mathbb{R}^n) \cap L^1(\mathbb{R}^n)$
when
$p > p_F$
and $n=1, 2$.
Ono \cite{On03} extended this result to
$n \le 3$.
Similar results
including the asymptotic behavior
of the global solution when
$n=2$ and $n=3$
were given by
Hosono and Ogawa \cite{HoOg04}
and Nishihara \cite{Ni03},
respectively.
The cases $n=4,5$ were done by
Narazaki \cite{Na04}.

The general higher dimensional cases
were settled by
Hayashi, Kaikina, and Naumkin \cite{HaKaNa04DIE}
and Ikehata and Tanizawa \cite{IkTa05}.
The initial data are taken from suitable weighted Sobolev spaces
embedded in $L^1(\mathbb{R}^n)$
in \cite{HaKaNa04DIE}
and 
the energy class
$H^1(\mathbb{R}^n) \times L^2(\mathbb{R}^n)$
with an exponentially increasing weight function
in \cite{IkTa05}, respectively.

From the above studies,
when the initial data belong to
$L^1(\mathbb{R}^n)$,
the critical exponent problem
seems to be almost completed.
Then, the question that what happens
if the initial data does not belong to
$L^1(\mathbb{R}^n)$
naturally arises.

For this problem,
when
$\mathcal{N}(u) = |u|^{p-1}u$,
Nakao and Ono \cite{NaOn93}
proved the small data global existence
of the solution
for the initial data
$(u_0, u_1)\in H^1(\mathbb{R}^n) \times L^2(\mathbb{R}^n)$
if $p \ge 1+4/n$.
In \cite{IkOh02}, the initial data
$u_0 \in H^1(\mathbb{R}^n) \cap L^r(\mathbb{R}^n)$
and
$u_1 \in L^2(\mathbb{R}^n) \cap L^r(\mathbb{R}^n)$
were treated, where
$r \in [1,2]$ when $n=1,2$
and
$r \in [\frac{\sqrt{n^2+16n}-n}{4}, \min \{ 2, \frac{n}{n-2}\} )$
when $3\le n \le 6$,
and the small data global existence was obtained
for $p > 1+2r/n$
(see also \cite{IkTaWa} for
an improvement including the case
$p = 1+2r/n$).
Moreover, in \cite{IkOh02}, the finite time blow-up of
solutions was proved
when $1<p<1+2r/n$.

In the case
$\mathcal{N}(u) = |u|^p$,
by \cite{IkInOkWa19},
the critical exponent is determined as
$p = 1+2r/n$
for the initial data
$(u_0, u_1) \in (H^s(\mathbb{R}^n) \cap H_r^{\beta}(\mathbb{R}^n)) \times
(H^{s-1}(\mathbb{R}^n) \cap L^r(\mathbb{R}^n)) $ with
$r \in (1,2]$ satisfying
$r \ge \frac{2(n-1)}{n+1}$,
$\beta = (n-1)|\frac{1}{2}-\frac{1}{r}|$,
and suitable $s \ge 0$,
where
$H^{\beta}_r(\mathbb{R}^n)$ denotes the Sobolev space
with integrability exponent $r$ and smoothness exponent $\beta$.

The above studies suggest that,
for $r \in (2,\infty)$,
the critical exponent will be still given by
$p_F(n,r) = 1+2r/n$
when the decay of initial data at the spatial infinity is 
$L^r(\mathbb{R}^n)$.
Moreover, for $r >1$, the critical case
$p = p_F(n,r)$
belongs to the existence case,
while we have the blow-up result
for $p= p_F(n,1)$
when
$r = 1$.
This conjecture corresponds to
the well-known result by
Weissler \cite{We81}
for the semilinear heat equation.

However, most of the previous results
considered only the case where
$r \in [1,2]$
and the initial data belong to $L^2(\mathbb{R}^n)$,
except for the result by
Narazaki and Nishihara \cite{NaNi08}.
In \cite{NaNi08},
for $n=1,2,3$,
they
considered the initial data in
a weighted $L^{\infty}$-space
and proved the small data global existence.
However, this approach does not seem to work
in higher space dimensions
due to the derivative loss.
In this paper,
we aim to solve the above conjecture
for the initial data
not belonging to
$L^2(\mathbb{R}^n)$
in general
including the higher dimensional cases.
In particular, we treat the case where
the nonlinearity is a polynomial.
This case enables us to avoid complicated
nonlinear estimates and
is suitable for constructing the framework
of our approach.
The general power-type nonlinearity will be treated in a forthcoming paper.

We also refer the reader to
\cite{Ik04JDE, Ik05JMAA1, On06, So19, IkTaWa}
for the study of the case of exterior domains.
For problems in exterior domains,
the case where the initial data does not belong to $L^2$ is completely open.

%%%%%%%%%%% Main results %%%%%%%%%

To state our results,
we set the following notations and assumptions.
Let
$\mathcal{D}(t)$
denote the solution operator of the linear problem,
that is,
\begin{align}\label{eq:Dt}
    \mathcal{D}(t):=e^{-\frac{t}{2}} \mathcal{F}^{-1} L(t, \xi) \mathcal{F},
\end{align}
where
\begin{align}\label{eq:L}
    L(t, \xi)
    :=
    \begin{dcases}
    \frac{\sinh \left(t \sqrt{1 / 4-|\xi|^{2}}\right)}{\sqrt{1 / 4-|\xi|^{2}}} &(|\xi|<1 / 2), \\
    \frac{\sin \left(t \sqrt{|\xi|^{2}-1 / 4}\right)}{\sqrt{|\xi|^{2}-1 / 4}} & (|\xi|>1 / 2).
    \end{dcases}
\end{align}
Here,
$\mathcal{F}$ and $\mathcal{F}^{-1}$
are the spatial Fourier transform and its inverse,
respectively.

As mentioned above, in what follows,
we assume that the nonlinearity is given by the polynomial:
\begin{equation}\label{assum:N}
    \mathcal{N}(u) = u^p,
    \quad
    \text{where $p$ is an integer greater than $1$.}
\end{equation}
We introduce the definition of solutions
used in this paper.
Let
$r \in (2,\infty)$
and 
$T \in (0,\infty]$.
We say that a function
$u \in C ([0, T) ; \dot{\mathcal{B}}^0_{r,2}(\mathbb{R}^n))$
is a solution of \eqref{eq:ndw} if
\begin{align}\label{eq:def:sol}
    u(t)
    &=
    \mathcal{D}(t) (u_0+u_1) + \partial_t \mathcal{D}(t) u_0
    + \int_0^t \mathcal{D}(t-\tau) \mathcal{N}(u) \,d\tau
\end{align}
holds in
$C ([0, T) ; \dot{\mathcal{B}}^0_{r,2}(\mathbb{R}^n))$.
The definition of the homogeneous Besov spaces $\dot{\mathcal{B}}^s_{p,q}(\mathbb{R}^n)$
will be given in the end of this section.

Our main results are
the local and global existence of solution.

%%%%%%%%%%%%%%%%%%%%%%%%%%%%%%%%%%%%%%%%%%%%%
\begin{theorem}\label{thm:lwp}
Let $n \ge 1$ and 
assume that the nonlinearity
$\mathcal{N}(u)$
is given by \eqref{assum:N}.
Let
$r \in (2, \infty)$
and define
$\beta := (n-1) \left(\frac{1}{2} - \frac{1}{r} \right)$.
Assume that there exists
$s \in \mathbb{R}$
satisfying the following conditions:
\begin{itemize}
    \item[(i)] $s > n \left(\frac{1}{2}-\frac{1}{r}\right)$.
    \item[(ii)]
    $p < 1+ \frac{n}{n-2s}$ if $2s<n$,
    and
    \begin{align}
    \min \left\{\frac{r}{2}, 1+\frac{r}{2s}-\frac{1}{s} \right\}
    \leq
    p
    \begin{dcases}
        <\infty & (2s \geq n), \\
    \le
    1+\frac{2}{n-2s}
    & (2s<n).
    \end{dcases}
\end{align}
	\item[(iii)] Either 
	$(2n-1)(\frac{1}{2}-\frac{1}{r}) \le s$ \ or, \  
	$\beta \le 1$ and $p \begin{dcases}
		< \infty &(2s \ge n),\\
		\le \frac{2n}{n-2s} \left( \frac{1}{r} + \frac{1-\beta}{n} \right) &(2s<n). \end{dcases} $
\end{itemize}
Let the initial data satisfy
\begin{align}
    &u_{0} \in
    \dot{\mathcal{B}}^{s}_{2,2} \left(\mathbb{R}^{n}\right)\cap
    \dot{\mathcal{B}}^{0}_{r,2} \left(\mathbb{R}^{n}\right),
    \quad
    \chi_{>1}(\nabla) u_0 \in \dot{\mathcal{B}}_{r,2}^{\beta} \left(\mathbb{R}^{n}\right),
    \\
    &u_{1} \in
    \dot{\mathcal{B}}_{2,2}^{s-1} \left(\mathbb{R}^{n}\right) \cap
    \dot{\mathcal{B}}^{0}_{r,2} \left(\mathbb{R}^{n}\right),
    \quad
    \chi_{>1}(\nabla) u_1 \in \dot{\mathcal{B}}_{r,2}^{\beta-1} \left(\mathbb{R}^{n}\right),
\end{align}
where $\chi_{>1}$ is the cut-off function defined in \eqref{eq:chi} later.
Then, there exist $T>0$ and a unique solution
\begin{align}
    u \in C ([0, T) ;
    \dot{\mathcal{B}}^{s}_{2,2} \left(\mathbb{R}^{n}\right) \cap
    \dot{\mathcal{B}}^{0}_{r,2} \left(\mathbb{R}^{n}\right) )
\end{align}
to the problem \eqref{eq:ndw}.
\end{theorem}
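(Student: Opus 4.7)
The plan is to prove Theorem \ref{thm:lwp} via a standard contraction mapping argument applied to the Duhamel formulation \eqref{eq:def:sol}. Denote by $M_0$ the size of the initial data in the norms appearing in the statement, and work on the complete metric space
\[
X_T := \bigl\{ u \in C([0,T]; \dot{\mathcal{B}}^{s}_{2,2}(\mathbb{R}^n) \cap \dot{\mathcal{B}}^{0}_{r,2}(\mathbb{R}^n)) : \|u\|_{X_T} \le 2C M_0 \bigr\}
\]
endowed with the natural sup-in-time Besov norm, and study the map
\[
\Phi(u)(t) := \mathcal{D}(t)(u_0+u_1) + \partial_t \mathcal{D}(t) u_0 + \int_0^t \mathcal{D}(t-\tau) u^p(\tau) \, d\tau.
\]
The goal is to show that $\Phi : X_T \to X_T$ is a contraction once $T$ is chosen sufficiently small.

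For the linear part, I would use sharp $L^p$-$L^q$ type estimates for $\mathcal{D}(t)$ and $\partial_t \mathcal{D}(t)$ with a low/high frequency decomposition. The symbol $L(t,\xi)$ in \eqref{eq:L} behaves diffusively for $|\xi| < 1/2$ but has genuine wave oscillations for $|\xi| > 1/2$; the high-frequency component therefore loses $(n-1)|\tfrac{1}{2} - \tfrac{1}{r}| = \beta$ derivatives when moving from $L^2$ to $L^r$ integrability. The cutoff $\chi_{>1}$ and the extra Besov regularity $\beta$ imposed on the data are tailored to absorb exactly this loss, so the linear part is bounded in $X_T$ by a constant multiple of $M_0$ uniformly in $T$.

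The central step is the nonlinear estimate. I would apply the same linear estimates inside the Duhamel integral to reduce everything to controlling $\|u^p(\tau)\|$ in the two Besov scales $\dot{\mathcal{B}}^{s}_{2,2}$ and $\dot{\mathcal{B}}^{0}_{r,2}$, and then invoke the fractional Leibniz / Moser-type product rule in homogeneous Besov spaces, combined with Sobolev embeddings of the form $\dot{\mathcal{B}}^{s}_{2,2} \hookrightarrow L^{q}$. Conditions (i)-(iii) are precisely the quantitative constraints on $(r,s,p)$ that make the Leibniz rule close on the norms $\|u\|_{\dot{\mathcal{B}}^{s}_{2,2}}$ and $\|u\|_{\dot{\mathcal{B}}^{0}_{r,2}}$: condition (i) yields Sobolev embedding into some $L^{q}$ with $q < \infty$ (in particular $\dot{\mathcal{B}}^0_{r,2} \hookrightarrow L^r$ allows the ambient integrability), condition (ii) places $p$ in the range where the interpolated exponents are admissible and (when $2s < n$) subcritical, and condition (iii) controls how the $\beta$-loss in the high-frequency propagator interacts with the product estimate. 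The time integration of the decay factors then yields a bound of the shape $C T^{\alpha} \|u\|_{X_T}^{p}$ with $\alpha > 0$. Using the factorization $u^p - v^p = (u-v)\sum_{k=0}^{p-1} u^k v^{p-1-k}$ and the same estimate gives
\[
\|\Phi(u) - \Phi(v)\|_{X_T} \le C T^{\alpha} \bigl(\|u\|_{X_T}^{p-1} + \|v\|_{X_T}^{p-1}\bigr) \|u-v\|_{X_T},
\]
so choosing $T$ small produces the desired contraction and, by Banach's fixed point theorem, the unique solution.

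The hardest part will be the nonlinear product step: finding a single Leibniz/Moser estimate framework that simultaneously closes on the two mismatched Besov scales $\dot{\mathcal{B}}^{s}_{2,2}$ and $\dot{\mathcal{B}}^{0}_{r,2}$, while correctly accounting for the frequency-dependent $\beta$-loss of the wave propagator. This is precisely why the three conditions (i)-(iii) appear as they do, and why the authors restrict to integer $p$: for polynomial $u^p$ the Leibniz rule is applied to an honest product, bypassing the technically delicate fractional Moser estimates that would be needed for $|u|^p$ or $|u|^{p-1}u$.
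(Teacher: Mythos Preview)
Your overall strategy matches the paper's: contraction mapping on a ball in $C([0,T]; \dot{\mathcal{B}}^s_{2,2} \cap \dot{\mathcal{B}}^0_{r,2})$, linear $L^p$--$L^q$ estimates with low/high-frequency splitting, fractional Leibniz for the polynomial nonlinearity, and a small-time factor $T^{\alpha}$ to force contraction. However, there is a real gap in how you set up the nonlinear step.

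You say you would ``reduce everything to controlling $\|u^p(\tau)\|$ in the two Besov scales $\dot{\mathcal{B}}^{s}_{2,2}$ and $\dot{\mathcal{B}}^{0}_{r,2}$.'' This is not what the paper does, and it is not clear it closes under only conditions (i)--(iii). The Duhamel kernel $\mathcal{D}(t-\tau)$ smooths by one derivative at high frequency and, in its diffusive low-frequency part, can trade integrability for decay. The paper exploits both effects by introducing an auxiliary norm $\|\cdot\|_{Y(T)}$ (Definition~\ref{def:norm}) that measures the source at regularity $s-1$ rather than $s$, and at a \emph{range} of integrabilities $\gamma\in[\sigma_1,\sigma_2]$ with $\sigma_1\approx r/p$, not at the single exponent $r$. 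Lemma~\ref{lem:Duha} then gives $\bigl\|\int_0^t\mathcal{D}(t-\tau)\psi\,d\tau\bigr\|_{X(T)}\lesssim T\,\|\psi\|_{Y(T)}$ for $T<1$, and Lemma~\ref{lem:nonlin} gives $\|u^p\|_{Y(T)}\lesssim\|u\|_{X(T)}^p$.

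Without the derivative gain, controlling $\|u^p\|_{\dot{B}^{s}_{2,2}}$ directly via Leibniz forces an $L^\infty$-type factor on $u$ that is unavailable when $s$ is well below $n/2$ (condition~(i) only gives $s>n(\tfrac12-\tfrac1r)$). Likewise, placing $u^p$ in $\dot{B}^{0}_{r,2}$ demands $u\in L^{pr}$-type control, whereas the natural integrability for $u^p$ coming from $u\in L^r$ is $\sigma_1\approx r/p$. The $Y(T)$ norm is not mere bookkeeping; it encodes precisely these relaxations, and the careful choices of H\"older exponents in Steps~1--4 of Lemma~\ref{lem:nonlin} (so that the interpolation Lemma~\ref{lem:int} applies) are where conditions (ii) and (iii) are actually used. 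Your proposal identifies the right architecture but omits the structural insight---measuring the nonlinearity in $Y(T)$ rather than in $X(T)$---that makes the estimates close for the full parameter range claimed.
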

%%%%%%%%%%%%%%%%%%%%%%%%%%%%%%%%%%%%%%%%%%%%%

%%%%%%%%%%%%%%%%%%%%%%%%%%%%%%%%%%%%%%%%%%%%%
We give some remarks on assumptions in Theorem \ref{thm:lwp}.
\begin{remark}\label{rem:lwp}
%\textup{(i)}
%The assumption
%$s > n \left(\frac{1}{2}-\frac{1}{r}\right)$
%gives
%$r < \frac{2n}{n-2s}$ if $2s < n$.
%\noindent
\textup{(i)}
The assumption
$p< 1+\frac{n}{n-2s}$
if $2s<n$
implies
$p < \frac{2 n}{n-2 s}$,
which will be frequently used
throughout this paper.

\noindent
\textup{(ii)}
Since $p$ is an integer greater than or equal to $2$,
some restrictions on the parameters $n, r, s$ appear.
For example, we should have
$2 \le \frac{2n}{n-2s}\left(\frac{1}{r}+\frac{1-\beta}{n}\right)$ if $2s < n$.
This and $\beta \ge 0$ imply that $s$ must satisfy $\frac{n}{2}-1 \le 2s$.

On the other hand, there are no restrictions on
the parameter $s$ from above, since the nonlinearity is smooth.
Indeed, the assumptions \textup{(i)--(iii)} in Theorem \ref{thm:lwp} are fulfilled
for given $r \in (2, \infty)$ and integer $p \ge 2$,
provided that $s$ is sufficiently large.
%In particular, for any dimension $n$, the set of admissible parameters is nonempty.

\noindent
\textup{(iii)}
For the assumption of the initial data, basically,
$\dot{\mathcal{B}}^{s}_{2,2} \left(\mathbb{R}^{n}\right)$
and
$\dot{\mathcal{B}}^{0}_{r,2} \left(\mathbb{R}^{n}\right)$
represent the smoothness and decay property of the data, respectively.
We also note that, for example,
$u_0 = u_1 = \langle x \rangle^{-n/r-\varepsilon}$
with sufficiently small $\varepsilon>0$
satisfy the assumptions in Theorem \ref{thm:lwp} for suitablly large $s$,
and do not belong to $L^2(\mathbb{R}^n)$.
\end{remark}
%%%%%%%%%%%%%%%%%%%%%%%%%%%%%%%%%%%%%%%%%%%%%

%%%%%%%%%%%%%%%%%%%%%%%%%%%%%%%%%%%%%%%%%%%%
\begin{theorem}\label{thm:gwp}
In addition to the assumptions of Theorem \ref{thm:lwp},
we further assume
\begin{align}\label{eq:critical}
    p \ge 1 + \frac{2r}{n}.
\end{align}
Then, there exists $\varepsilon_0 > 0$
such that if the initial data satisfy
\begin{align}\label{eq:varepsilon0}
    \quad
    \| u_0 \|_{
    \dot{B}^{s}_{2,2} \cap \dot{B}_{r,2}^0
    }
    + \| \chi_{>1}(\nabla) u_0 \|_{\dot{B}_{r,2}^{\beta}}
    +
    \| u_1 \|_{\dot{B}^{s-1}_{2,2}\cap \dot{B}^{0}_{r,2}}
    + \| \chi_{>1}(\nabla) u_1 \|_{\dot{B}_{r,2}^{\beta-1}}
    \le
    \varepsilon_0,
\end{align}
then the problem \eqref{eq:ndw} admits a
unique global solution
\begin{align}
    u \in
    C ([0, \infty) ; \dot{\mathcal{B}}^{s}_{2,2} \left(\mathbb{R}^{n}\right)
    \cap \dot{\mathcal{B}}^{0}_{r,2} \left(\mathbb{R}^{n}\right)
    ).
\end{align}
\end{theorem}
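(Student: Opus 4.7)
The plan is to upgrade the local-existence contraction behind Theorem \ref{thm:lwp} to a global one by running a Banach fixed-point argument on $[0,\infty)$ in a time-weighted function space whose weights reflect the heat-kernel-like decay of the linear propagator $\mathcal{D}(t)$. Concretely, I would seek a fixed point of
\begin{align*}
\Phi(u)(t) := \mathcal{D}(t)(u_0+u_1) + \partial_t\mathcal{D}(t)\, u_0 + \int_0^t \mathcal{D}(t-\tau)\, u(\tau)^p \, d\tau
\end{align*}
in a closed ball of
\begin{align*}
X := \Bigl\{ u \in C([0,\infty);\, \dot{\mathcal{B}}^s_{2,2} \cap \dot{\mathcal{B}}^0_{r,2}) : \|u\|_X < \infty \Bigr\},
\end{align*}
where $\|u\|_X$ is a finite sum of weighted suprema of the form $\sup_{t\ge 0}(1+t)^{\alpha_j}\|u(t)\|_{Y_j}$, with $Y_j$ running over the Besov spaces appearing in \eqref{eq:varepsilon0} and each $\alpha_j$ chosen to equal the decay rate predicted by the linear $L^\rho$–$L^q$ estimates for $\mathcal{D}(t)$ applied to data in the corresponding component of \eqref{eq:varepsilon0} (in particular, the exponent for the $\dot{\mathcal{B}}^0_{r,2}$ component is $0$, while those for $L^q$ components with $q>r$ are strictly positive).

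With these weights, the linear contribution is controlled directly by the $L^\rho$–$L^q$ estimates that already underlie the proof of Theorem \ref{thm:lwp}, giving $\|\mathcal{D}(t)(u_0+u_1)+\partial_t\mathcal{D}(t)\, u_0\|_X \lesssim \varepsilon_0$ from \eqref{eq:varepsilon0}. For the Duhamel term I would reuse the fractional Leibniz rule in homogeneous Besov spaces already exploited in Theorem \ref{thm:lwp} -- the integer-$p$ assumption \eqref{assum:N} makes $u^p$ a genuine product, avoiding any H\"older-regularity loss -- to produce nonlinear bounds of the type $\|u(\tau)^p\|_{\dot{\mathcal{B}}^0_{r/p,2}} \lesssim \|u(\tau)\|_{\dot{\mathcal{B}}^0_{r,2}}^p$ and their analogues at the $\dot{\mathcal{B}}^s_{2,2}$ and high-frequency $\beta$-level. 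Combining these with the time-decay of $\mathcal{D}(t-\tau)$ reduces the Duhamel contribution to a superposition of integrals of the form
\begin{align*}
\int_0^t (1+t-\tau)^{-\frac{n(p-1)}{2r}}(1+\tau)^{-\gamma}\,d\tau,
\end{align*}
and a standard calculus lemma shows that, thanks to the Fujita-type condition \eqref{eq:critical} (equivalently $\tfrac{n(p-1)}{2r} \ge 1$), every such integral is bounded by a constant multiple of the target weight $(1+t)^{-\alpha_j}$ uniformly in $t \ge 0$. This yields $\|\Phi(u)\|_X \lesssim \varepsilon_0 + \|u\|_X^p$, and the Lipschitz estimate $\|\Phi(u)-\Phi(v)\|_X \lesssim (\|u\|_X+\|v\|_X)^{p-1}\|u-v\|_X$ follows from the same bounds applied to the identity $u^p-v^p = (u-v)\sum_{k=0}^{p-1} u^k v^{p-1-k}$. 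For $\varepsilon_0$ sufficiently small, $\Phi$ is a strict contraction on a ball of radius $O(\varepsilon_0)$, and its fixed point is the desired global solution; coincidence on $[0,T)$ with the local solution of Theorem \ref{thm:lwp} follows from local uniqueness.

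The main obstacle is not a new analytical ingredient but the simultaneous bookkeeping: the weights $\alpha_j$ must be chosen compatibly across the $L^r$-decay component, the $L^2$ high-regularity component, and the high-frequency Sobolev component governed by $\beta$ via $\chi_{>1}(\nabla)$, so that every Duhamel integral of the above form closes at once. Verifying that the admissible range of $s$ and $p$ determined by conditions (i)–(iii) of Theorem \ref{thm:lwp}, together with \eqref{eq:critical}, leaves enough room for such a simultaneous choice -- and in particular that the critical case $p = 1 + 2r/n$ is accommodated by a slight gap between the $L^r$ weight and the sharper weight supplied by a higher-integrability companion norm -- is the delicate step; once it is performed, all remaining ingredients are already established in the local theory.
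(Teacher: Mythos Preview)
Your proposal is correct and follows essentially the same route as the paper: a contraction mapping argument for $\Phi$ run directly on $[0,\infty)$ in a time-weighted space, with the linear part bounded by Proposition~\ref{prop:lplq}, the nonlinear part by the fractional Leibniz rule (Lemma~\ref{lem:nonlin}), and the Duhamel integrals closed uniformly in $t$ precisely because \eqref{eq:critical} gives $\tfrac{n(p-1)}{2r}\ge 1$ (Lemma~\ref{lem:Duha}). The paper's organizational choice is slightly leaner than what you sketch: rather than carrying the high-frequency $\beta$-level norms as components of the solution space, it keeps $\|u\|_{X(T)}$ to just the $\dot{B}^s_{2,2}$ and $\dot{B}^0_{r,2}$ pieces and absorbs the $\chi_{>1}(\nabla)$ bookkeeping into an auxiliary norm $\|\cdot\|_{Y(T)}$ on the nonlinearity, where assumption~(iii) of Theorem~\ref{thm:lwp} is invoked once to dominate $\|\chi_{>1}(\nabla)\psi\|_{\dot{B}^{\beta-1}_{r,2}}$ by either the $\dot{B}^{s-1}_{2,2}$ or the $\dot{B}^0_{\sigma_2,2}$ component; but this is a packaging difference, not a substantive one.
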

%%%%%%%%%%%%%%%%%%%%%%%%%%%%%%%%%%%%%%%%%%%

We explain the strategies for the proofs of above theorems.
Since we assume that the initial data lie in $L^r(\mathbb{R}^n)$
and not in $L^2(\mathbb{R}^n)$
in general, we need to use homogeneous
spaces such as
$\dot{\mathcal{B}}^s_{2,2}(\mathbb{R}^n)$.
Then, in view of the linear and nonlinear estimates
(see Proposition \ref{prop:lplq} and Lemma \ref{lem:nonlin}),
it is natural to use
$\dot{\mathcal{B}}^0_{r,2}(\mathbb{R}^n)$
rather than
$L^r(\mathbb{R}^n)$.
However, 
in a standard definition of the homogeneous space
(see e.g., \cite[Ch. 5, Definition 1]{Tr}),
an element of it is
not a function but an equivalent class,
for which the composition with nonlinear function does not make sense.
To avoid handling with an equivalent class,
we employ the definition of homogeneous Besov space by
Bahouri, Chemin and Danchin \cite{BaChDa}.

Since the nonlinearity is polynomial,
we can estimate the nonlinear term by the fractional Leibniz rule only
(see Lemma \ref{lem:nonlin}).
For general power type nonlinearities, besides the fractional Leibniz rule,
the fractional chain rule will be also needed.

This paper is organized as follows.
In the next section, we prepare
the $L^p$-$L^q$ estimates of
the solution operator
and some basic tools for the nonlinear estimate.
In section 3, we give a proof of
Theorems \ref{thm:lwp} and \ref{thm:gwp}.
Finally, in Appendix,
we present proofs of some auxiliary results
used in the proof of Theorem \ref{thm:lwp}.

\subsection{Notations}
Here, we explain the notations used
throughout this paper.
The symbol
$X \lesssim Y$
stands for
$X \le C Y$
with some constant $C>0$.
$X\sim Y$
means that both
$X \lesssim Y$ and $Y\lesssim X$
hold.

For a function
$f : \mathbb{R}^n \to \mathbb{C}$,
the Fourier and the inverse Fourier transforms are defined by
\begin{align}
    \mathcal{F}[f](\xi)
    &= \hat{f}(\xi)= (2 \pi)^{-n / 2} \int_{\mathbb{R}^{n}} e^{-i x \cdot \xi} f(x) \,d x, \\
    \mathcal{F}^{-1}[f](x)
    &= (2 \pi)^{-n / 2} \int_{\mathbb{R}^{n}} e^{i x \cdot \xi} f(\xi) \,d \xi.
\end{align}
For a function
$m : \mathbb{R}^n \to \mathbb{C}$,
we define the Fourier multiplier
$m(\nabla)$
by
\begin{align}
    m(\nabla) f(x)=\mathcal{F}^{-1}[m(\xi) \hat{f}(\xi)](x).
\end{align}

Let
$\chi \in C_0^{\infty}(\mathbb{R}^n)$
be a cut-off function such that
\begin{align}
    0 \le \chi(\xi) \le 1 \quad (\xi \in \mathbb{R}^n),\qquad
    \chi(\xi) = \begin{dcases}
    1 &(|\xi| \le 1),\\
    0 &( |\xi| \ge \frac{25}{24} ).
    \end{dcases}
\end{align}
For 
$a > 0$,
we define
\begin{align}
\label{eq:chi}
    \chi_{\le a} (\xi)
    := \chi \left( \frac{\xi}{a} \right),
    \quad
    \chi_{> a} (\xi)
    := 1 - \chi_{\le a} (\xi),
    \quad
    \chi_a(\xi) := \chi_{\le a}(\xi) - \chi_{\le \frac{a}{2}}(\xi).
\end{align}
For $j \in \mathbb{Z}$,
we also set
\begin{align}
    \widehat{\Delta_{\leq j} f}(\xi) &:=\chi_{\leq 2^{j}}(\xi) \widehat{f}(\xi), \\
    \widehat{\Delta_{>j} f}(\xi) &:=\chi_{>2^{j}}(\xi) \widehat{f}(\xi), \\
    \widehat{\Delta_{j} f}(\xi) &:=\chi_{2^{j}}(\xi) \widehat{f}(\xi).
\end{align}
Moreover, we assume that
$\{ \chi_{2^j} \}_{j\in \mathbb{Z}}$
is a dyadic decomposition of unity:
\begin{align}
    \chi_{\leq 1} + \sum_{j=1}^{\infty} \chi_{2^j} = 1 \ \text { on } \ 
    \mathbb{R}^n \quad
    \text { and }
    \quad
    \sum_{j \in \mathbb{Z}} \chi_{2^j}=1
    \ \text { on } \ 
    \mathbb{R}^n \setminus \{0\}.
\end{align}
We sometimes use
\begin{align}
    \widetilde{\Delta}_j f
    = \Delta_{j-1} f + \Delta_j f + \Delta_{j+1} f.
\end{align}
Then, by the support property of
$\chi$,
we have
$\Delta_j f = \widetilde{\Delta}_j \Delta_j f$.

Next, we define function spaces used in this paper.
First,
$\mathcal{S}(\mathbb{R}^n)$
and
$\mathcal{S}'(\mathbb{R}^n)$
are
the Schwartz space and its dual, respectively.

Next, following \cite{BaChDa} and \cite{HiWa19}, we introduce
\begin{align}
    \mathcal{S}_{\ast} (\mathbb{R}^n)
    &:= \left\{ f \in \mathcal{S}(\mathbb{R}^n) ; 0 \notin \supp \hat{f} \right\},\\
    \mathcal{S}'_h (\mathbb{R}^n)
    &:= \left\{ f \in \mathcal{S}'(\mathbb{R}^n) ; \lim_{a\to 0} \| \chi_{\le a}(\nabla) f \|_{L^{\infty}} = 0 \right\},\\
    \mathcal{S}'_0 (\mathbb{R}^n)
    &:= \left\{ f \in \mathcal{S}'(\mathbb{R}^n) ; \lim_{a\to 0} \chi_{\le a}(\nabla) f = 0 \ \text{in} \ \mathcal{S}'(\mathbb{R}^n) \right\}.
\end{align}
It is obvious that
$\mathcal{S}'_h(\mathbb{R}^n) \subset
\mathcal{S}'_0(\mathbb{R}^n)$.
Moreover, we have, for any
$f \in \mathcal{S}'(\mathbb{R}^n)$,
\begin{align}
    \lim_{a \to \infty} \Delta_{\le a}f
    = \Delta_{\le 0}f + \sum_{j=1}^{\infty} \Delta_j f  = f
    \quad \text{in} \ \mathcal{S}'(\mathbb{R}^n),
\end{align}
and for any $f \in \mathcal{S}'_0(\mathbb{R}^n)$,
\begin{align}
    \sum_{j=-\infty}^{\infty} \Delta_j f  = f
    \quad \text{in} \ \mathcal{S}'(\mathbb{R}^n).
\end{align}

For
$1 \le p \le \infty$,
$L^p = L^p(\mathbb{R}^n)$
denotes the usual Lebesgue space
equipped with the norm
\begin{align}
    \| f \|_{L^p}
    := \begin{dcases}
    \left( \int_{\mathbb{R}^n}
    |f(x)|^p \,dx \right)^{1/p}
    &(1\le p < \infty),\\
    \esssup_{x\in \mathbb{R}^n} |f(x)|
    &(p=\infty).
    \end{dcases}
\end{align}
For
$s \in \mathbb{R}$
and $1 \le p \le \infty$,
the Sobolev space
$H_p^s = H_p^s(\mathbb{R}^n)$
is defined by
\begin{align}
    H_p^s(\mathbb{R}^n)
    &:=
    \left\{ f \in \mathcal{S}'(\mathbb{R}^n) ; \ 
    \| f \|_{H_p^s} < \infty \right\},\\
    \| f \|_{H_p^s}
    &:=
    \| \langle \nabla \rangle^s f \|_{L^p(\mathbb{R}^n)},
\end{align}
where
$\langle \xi \rangle := \sqrt{1+|\xi|^2}$.
When $p=2$, we will write $H^s_2(\mathbb{R}^n) = H^s (\mathbb{R}^n)$.

Next, for
$s \in \mathbb{R}$,
$1 \le p, q \le \infty$,
and
$f \in \mathcal{S}'(\mathbb{R}^n)$,
we define the homogeneous Besov seminorm of $f$ by
\[
	\| f \|_{\dot{B}^s_{p,q}}
	:=
	\left\|\left\{2^{j s}\left\|\Delta_{j} f\right\|_{L^{p}}\right\}_{j \in \mathbb{Z}} \right\|_{l^{q}},
\]
where $l^q$-norm is given by
\begin{align}
    \left\| \{ a_j \}_{j\in \mathbb{Z}} \right\|_{l^q}
    := \begin{dcases}
    \left( \sum_{j=-\infty}^{\infty} |a_j|^q \right)^{1/q}
    &(1\le q < \infty),\\
    \sup_{j \in \mathbb{Z}} |a_j|
    &(q = \infty).
    \end{dcases}
\end{align}
If $f \in \mathcal{S}'(\mathbb{R}^n)$ satisfies $\| f \|_{\dot{B}^s_{p,q}} = 0$,
then $\supp \hat{f} \subset \{ 0 \}$ holds and $f$ is a polynomial.
In order to make $\| \cdot \|_{\dot{B}^s_{p,q}}$ be a norm,
following \cite{BaChDa},
we replace $\mathcal{S}'(\mathbb{R}^n)$ by $\mathcal{S}'_h(\mathbb{R}^n)$
and define the homogeneous Besov space by
\begin{align}
    \dot{\mathcal{B}}^{s}_{p,q}(\mathbb{R}^n)
    &:=
    \left\{
    f \in \mathcal{S}'_h(\mathbb{R}^n) ;\ 
    \| f \|_{\dot{B}^s_{p,q}} < \infty
    \right\}.
\end{align}

We will sometimes use the inhomogeneous Besov space defined by
\begin{align*}
    B^s_{p,q}(\mathbb{R}^n) &:= 
    \left\{ f \in \mathcal{S}'(\mathbb{R}^n) ;\, \| f \|_{B^s_{p,q}} < \infty \right\},\\
    \| f \|_{B^s_{p,q}} &:=
    \| \Delta_{\le 0} f \|_{L^p}
    + \left\| \left\{ 2^{js} \| \Delta_j f \|_{L^p} \right\}_{j \in \mathbb{N}} \right\|_{l^q},
\end{align*}
where
$\| \{ a_j \}_{j\in \mathbb{N}} \|_{l^q}$
is defined in the same way as above.

For a Banach space $X$ and an interval $I \subset \mathbb{R}$,
we define
$C(I; X)$
as the space of continuous functions from $I$ to $X$.
Moreover, $L^{\infty}(I ; X)$
denotes the space of strongly measurable essentially bounded functions from $I$ to $X$.

%%%%%%%%%%%%%%%%%%%%%%%%%%%%%%%%%
\subsection{Basic facts on functions spaces}
Here, we collect some basic properties on the function spaces defined above.
We will use the following facts throughout the paper.

\begin{enumerate}
\item
$\mathcal{S}(\mathbb{R}^n)$ is a separable space (\cite[p.144]{ReSiI}).

\item
For $q \in [1,\infty)$, the embedding $L^q(\mathbb{R}^n) \subset \mathcal{S}'_h(\mathbb{R}^n)$ holds.

\item
For $s\in \mathbb{R}$ and $p, q \in [1,\infty)$,
the embedding $\mathcal{S}_{\ast}(\mathbb{R}^n) \subset \dot{\mathcal{B}}^s_{p,q}(\mathbb{R}^n)$ 
is dense (\cite[Theorem 2.27]{BaChDa}).

\item
If $s \in \mathbb{R}$ and $p,q\in [1,\infty]$ satisfy
$s < \frac{n}{p}$ or $s=\frac{n}{p}, q=1$,
then
$\dot{\mathcal{B}}^s_{p,q}(\mathbb{R}^n)$
is complete.
Moreover, if $s,p,q$ satisfy the same conditions as above, then 
$\dot{\mathcal{B}}^s_{p,q}(\mathbb{R}^n) \cap \dot{\mathcal{B}}^{\tilde{s}}_{\tilde{p},\tilde{q}}(\mathbb{R}^n)$
is complete for any
$\tilde{s}\in \mathbb{R}$ and $\tilde{p}, \tilde{q} \in [1,\infty]$
(\cite[Theorem 2.25]{BaChDa}).

\item
For
$1 \le p_1 \le p_2 \le \infty$,
$1 \le q_1 \le q_2 \le \infty$
and $s \in \mathbb{R}$,
the embedding
$\dot{\mathcal{B}}^s_{p_1, q_1}(\mathbb{R}^n) \subset
\dot{\mathcal{B}}^{s-n(\frac{1}{p_1}-\frac{1}{p_2})}_{p_2, q_2}(\mathbb{R}^n)$
holds
(\cite[Theorem 2.20]{BaChDa}).

\item
For $q \in [1,\infty]$, the space $L^q(\mathbb{R}^n)$
is continuously embedded in the space $\dot{\mathcal{B}}^0_{q,\infty}(\mathbb{R}^n)$
(\cite[Theorem 2.39]{BaChDa}).

\item
For $q \in [2,\infty)$, the embeddings
$\dot{\mathcal{B}}^0_{q,2} (\mathbb{R}^n) \subset L^q(\mathbb{R}^n)$
and
$B^0_{q,2} (\mathbb{R}^n) \subset L^q(\mathbb{R}^n)$
hold.
Moreover, for $q \in (1,2]$, the embeddings
$L^q(\mathbb{R}^n) \subset \dot{\mathcal{B}}^0_{q,2} (\mathbb{R}^n)$
and
$L^q(\mathbb{R}^n) \subset B^0_{q,2} (\mathbb{R}^n)$
hold
(\cite[Theorem 2.40]{BaChDa}, \cite[Theorem 6.4.4]{BeLo}).

\item 
(The Sobolev embedding)
The embedding
$B^s_{p, q} (\mathbb{R}^n) \subset B^{\tilde{s}}_{\tilde{p}, \tilde{q}}(\mathbb{R}^n)$
holds for
$1 \le p \le \tilde{p} \le \infty, 1 \le q \le \tilde{q} \le \infty$ and $s, \tilde{s} \in \mathbb{R}$
satisfying
$s - \frac{n}{p} = \tilde{s} - \frac{n}{\tilde{p}}$.
Moreover, the embedding
$H^s_p(\mathbb{R}^n) \subset H^{\tilde{s}}_{\tilde{p}}(\mathbb{R}^n)$
holds for 
$1 < p \le \tilde{p} < \infty$ and  $s, \tilde{s} \in \mathbb{R}$
satisfying
$s - \frac{n}{p} = \tilde{s} - \frac{n}{\tilde{p}}$
(\cite[Theorem 6.5.1]{BeLo}).

\item
(Duality of $\dot{\mathcal{B}}^s_{p,q}(\mathbb{R}^n)$)
For $p, q \in [1,\infty)$ and $s \in \mathbb{R}$ satisfying
$\frac{n}{p}-n < s < \frac{n}{p}$
or
$s = \frac{n}{p}, q=1$,
we have the isomorphism
$( \dot{\mathcal{B}}^s_{p,q}(\mathbb{R}^n))' \simeq \dot{\mathcal{B}}^{-s}_{p',q'}(\mathbb{R}^n)$,
where $p'=\frac{p}{p-1}$ stands for the H\"{o}lder conjugate of $p$
(\cite[Theorem 2.17]{Sa} and \cite[Remark 2.24, Proposition 2.29]{BaChDa}).

\item (The Pettis theorem) 
Let $X$ be a separable Banach space and let $I \subset \mathbb{R}$ be an interval.
Then, a function $u : I \to X$ is strongly measurable if and only if
$u$ is weakly measurable (\cite[Theorem 1.1.6]{HyNiVeWe}).

\item (Measurability of composite functions)
Let $X, Y$ be Banach spaces and let $I \subset \mathbb{R}$ be an interval.
If $f : I \to X$ is strongly measurable and $\phi : X \to Y$ is continuous,
then $\phi \circ f : I \to Y$ is strongly measurable (\cite[Corollary 1.1.11]{HyNiVeWe}).

\end{enumerate}

%%%%%%%%%%%%%%%%%%%%%%%%%%%%%%%%%%%%%%%%%%%%%%%%%%%%%%%%%5
%%%%%%%%%%%%%%%%%%%%%%%%%%%%%%%%%%%%%%%%%%%%%%%%%%%%%%%%55
%%%%%%%%%%%%%%%%%%%%%%%%%%%%%%%%%%%%%%%%%%%%%%%%%%%%%%%%%%%
\section{Preliminaries}

We first recall the
$L^p$-$L^q$ estimates for the operator $\mathcal{D}(t)$
proved in the previous result
\cite[Theorem 1.1]{IkInOkWa19}.
\begin{proposition}[$L^p$-$L^q$ estimates] \label{prop:lplq:Leb}
Let
$1 \le q \leq p<\infty$,
$p \neq 1$,
and
$\beta=(n-1)\left|\frac{1}{2}-\frac{1}{p}\right|$. Then, there exists $\delta_{p}>0$ such that
for $t>0$,
\begin{align}
    &\left\|  \mathcal{D}(t) g\right\|_{L^p}
    \lesssim
    \langle t\rangle^{-\frac{n}{2}\left(\frac{1}{q}-\frac{1}{p}\right)}
    \left\| \chi_{\leq 1}(\nabla) g \right\|_{L^q}
    +e^{-\frac{t}{2}} \langle t\rangle^{\delta_{p}}
    \left\| \chi_{>1}(\nabla) g\right\|_{H_p^{\beta-1}}
\end{align}
for any
$g \in \mathcal{S}'(\mathbb{R}^n)$
satisfying
$\chi_{\le 1}(\nabla) g \in
L^q(\mathbb{R}^n)$
and
$\chi_{>1}(\nabla) g \in
H_p^{\beta-1}(\mathbb{R}^n)$.
\end{proposition}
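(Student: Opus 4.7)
The plan is to reduce the proposition to two independent estimates: a heat-kernel-type bound for the low-frequency piece and an oscillatory-integral bound for the high-frequency piece. Accordingly, I would split $g=\chi_{\le 1}(\nabla)g+\chi_{>1}(\nabla)g$ and treat each term separately, matching the two terms on the right-hand side of the estimate.

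For the low-frequency piece, I further split according to the definition of $L(t,\xi)$ into $|\xi|\le 1/2$ and $1/2<|\xi|\le 1$. On $|\xi|\le 1/2$, the symbol $e^{-t/2}\sinh(t\sqrt{1/4-|\xi|^2})/\sqrt{1/4-|\xi|^2}$ factors as $e^{-t(1/2-\sqrt{1/4-|\xi|^2})}/(2\sqrt{1/4-|\xi|^2})$ plus an exponentially small error, and near $\xi=0$ the dominant factor behaves like the Gaussian heat symbol $e^{-t|\xi|^2}$. On the transition shell $1/2<|\xi|\le 1$, one has $|\sin(t\sqrt{|\xi|^2-1/4})/\sqrt{|\xi|^2-1/4}|\lesssim \langle t\rangle$, so the exponential prefactor $e^{-t/2}$ absorbs any polynomial growth. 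By parabolic rescaling of the Gaussian-like kernel, the $L^s$-norm of the inverse Fourier transform of $\chi_{\le 1}(\xi)\,e^{-t/2}L(t,\xi)$ is $\lesssim \langle t\rangle^{-\frac{n}{2}(1-\frac{1}{s})}$ for all $1\le s\le \infty$, and Young's convolution inequality with the Hölder relation $1/s=1-1/q+1/p$ then yields the claimed $\langle t\rangle^{-\frac{n}{2}(1/q-1/p)}$ bound on $\|\mathcal{D}(t)\chi_{\le 1}(\nabla)g\|_{L^p}$.

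For the high-frequency piece, on $|\xi|>1$ we have $\sqrt{|\xi|^2-1/4}\sim |\xi|$, and writing $\sin(t\sqrt{|\xi|^2-1/4})=\frac{1}{2i}\bigl(e^{it\sqrt{|\xi|^2-1/4}}-e^{-it\sqrt{|\xi|^2-1/4}}\bigr)$ reduces the question to $L^p$-boundedness of the oscillatory multipliers $\chi_{>1}(\xi)\,e^{\pm it\sqrt{|\xi|^2-1/4}}|\xi|^{-\beta}$. These are of Miyachi--Peral--Brenner type: modulo acceptable polynomial growth in $t$, their $L^p$-operator norm is finite once one pays $\beta=(n-1)|\tfrac12-\tfrac1p|$ derivatives. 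Absorbing the $|\xi|^{-1}$ coming from $1/\sqrt{|\xi|^2-1/4}$ into a single derivative and multiplying by the $e^{-t/2}$ prefactor of $\mathcal{D}(t)$ produces the stated bound $e^{-t/2}\langle t\rangle^{\delta_p}\|\chi_{>1}(\nabla)g\|_{H_p^{\beta-1}}$ for some $\delta_p>0$.

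The main obstacle, I expect, is the high-frequency oscillatory estimate: one cannot directly invoke the classical wave-equation Miyachi theorem because the phase $t\sqrt{|\xi|^2-1/4}$ is not the homogeneous $t|\xi|$, only asymptotic to it. The standard remedy is a dyadic Littlewood--Paley decomposition in $|\xi|$: on each shell $|\xi|\sim 2^j$ with $j\ge 0$, one expands $\sqrt{|\xi|^2-1/4}=|\xi|\sqrt{1-(4|\xi|^2)^{-1}}$, estimates the non-principal remainder symbolically via Mikhlin-type arguments, and applies stationary/non-stationary phase to the principal $e^{\pm it|\xi|}$ factor. Summing over shells produces the polynomial loss $\langle t\rangle^{\delta_p}$ and accounts for the derivative loss $\beta-1$ encoded in the Sobolev exponent on the right-hand side.
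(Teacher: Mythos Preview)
Your proposal is correct and matches the approach behind this estimate. Note that the paper does not actually re-prove this proposition but cites it from \cite{IkInOkWa19}; the closely related Besov-norm version (Proposition~\ref{prop:lplq}) is proved in Appendix~A by exactly the low/high-frequency dyadic split you describe, with the two key lemmas (heat-type bound for $k\le 0$, Miyachi--Peral oscillatory bound with loss $\beta-1$ for $k\ge 0$) again referred back to \cite[Propositions~2.4 and~2.5]{IkInOkWa19}.
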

%%%%%%%%%%%%%%%%%%%%%%%%%%%%%%%%%%%%%%%%%%%%%%

In this paper, we mainly use the following
refined version of
$L^p$-$L^q$ estimates in Besov norms.

%%%%%%%%%%%%%%%%%%%%%%%%%%%%%%%%%%%%%%%%%%%%%%%%
\begin{proposition}[$L^{p}$-$L^{q}$ estimates in Besov norms]\label{prop:lplq}
Let
$1 \le q \leq p<\infty$,
$p \neq 1$,
$s_{1} \geq s_{2}$,
and
$\beta=(n-1)\left|\frac{1}{2}-\frac{1}{p}\right|$. Then, there exists $\delta_{p}>0$ such that
for $t>0$,
we have
$\mathcal{D}(t)g \in \dot{\mathcal{B}}^{s_1}_{p,2}(\mathbb{R}^n)$
and
\begin{align}
    &\left\|  \mathcal{D}(t) g\right\|_{\dot{B}^{s_1}_{p,2}}
    \lesssim
    \langle t\rangle^{-\frac{n}{2}\left(\frac{1}{q}-\frac{1}{p}\right)-\frac{s_{1}-s_{2}}{2}}
    \left\| \chi_{\leq 1}(\nabla) g \right\|_{\dot{B}^{s_2}_{q,2}}
    +e^{-\frac{t}{2}} \langle t\rangle^{\delta_{p}}
    \left\| \chi_{>1}(\nabla) g\right\|_{\dot{B}_{p,2}^{s_1+\beta-1}}
\end{align}
for any
$g \in \mathcal{S}'_h(\mathbb{R}^n)$
satisfying
$\chi_{\le 1}(\nabla) g \in
\dot{\mathcal{B}}^{s_2}_{q,2}(\mathbb{R}^n)$
and
$\chi_{>1}(\nabla) g \in
\dot{\mathcal{B}}_{p,2}^{s_1+\beta-1}(\mathbb{R}^n)$,
and we have
$\partial_t \mathcal{D}(t)g \in \dot{\mathcal{B}}^{s_1}_{p,2}(\mathbb{R}^n)$ and
\begin{align}
    \left\| \partial_{t} \mathcal{D}(t) g\right\|_{\dot{B}^{s_1}_{p,2}}
    \lesssim\langle t\rangle^{-\frac{n}{2}\left(\frac{1}{q}-\frac{1}{p}\right)-\frac{s_{1}-s_{2}}{2}-1}
    \left\| \chi_{\leq 1}(\nabla) g\right\|_{\dot{B}^{s_2}_{q,2}}
    + e^{-\frac{t}{2}} \langle t\rangle^{\delta_{p}}
    \left\| \chi_{>1}(\nabla) g \right\|_{\dot{B}_{p,2}^{s_1+\beta}}
\end{align}
for any
$g \in \mathcal{S}'_h(\mathbb{R}^n)$
satisfying
$\chi_{\le 1}(\nabla) g \in
\dot{\mathcal{B}}^{s_2}_{q,2}(\mathbb{R}^n)$
and
$\chi_{>1}(\nabla) g \in
\dot{\mathcal{B}}_{p,2}^{s_1+\beta}(\mathbb{R}^n)$.
\end{proposition}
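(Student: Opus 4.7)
The plan is to lift the scalar bounds of Proposition~\ref{prop:lplq:Leb} to the Besov setting via a Littlewood--Paley decomposition. Since $\mathcal{D}(t)$ is a Fourier multiplier it commutes with $\Delta_j$, so $\Delta_j \mathcal{D}(t) g = \mathcal{D}(t) \Delta_j g$, and it suffices to control each dyadic block in $L^p$ and then take the $\ell^2$-sum weighted by $2^{js_1}$. Following the low/high-frequency decomposition $g = \chi_{\le 1}(\nabla) g + \chi_{>1}(\nabla) g$ already present in Proposition~\ref{prop:lplq:Leb}, the high-frequency part is handled by applying the second term of the scalar bound to each $\Delta_j \chi_{>1}(\nabla) g$ together with Bernstein's inequality $\|\Delta_j g\|_{H^{\beta-1}_p} \lesssim 2^{j(\beta-1)} \|\Delta_j g\|_{L^p}$; since the $\chi_{>1}(\nabla)$ cutoff restricts the sum to $j$ bounded below, summing in $\ell^2$ produces exactly $e^{-t/2}\langle t\rangle^{\delta_p}\|\chi_{>1}(\nabla) g\|_{\dot{B}^{s_1+\beta-1}_{p,2}}$.

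For the low-frequency part, applying Proposition~\ref{prop:lplq:Leb} directly to $\Delta_j \chi_{\le 1}(\nabla) g$ only recovers the decay $\langle t\rangle^{-\frac{n}{2}(1/q-1/p)}$, falling short of the target by a factor $\langle t\rangle^{(s_1-s_2)/2}$. I would close this gap by exploiting the heat-like character of the multiplier at low frequencies: writing $e^{-t/2} L(t,\xi) = (e^{-t\lambda_-(\xi)} - e^{-t\lambda_+(\xi)})/(2\sqrt{1/4-|\xi|^2})$ with $\lambda_\pm(\xi) = \tfrac{1}{2} \pm \sqrt{1/4-|\xi|^2}$, one has $\lambda_-(\xi) \sim |\xi|^2$ on $|\xi|$ bounded away from $1/2$, yielding the pointwise bound $|e^{-t/2} L(t,\xi)| \lesssim e^{-c t|\xi|^2}$ on some interval $|\xi| \le \eta < 1/2$. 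Revisiting the proof of Proposition~\ref{prop:lplq:Leb} with this Gaussian-in-frequency information upgrades the scalar bound on each shell $|\xi|\sim 2^j$ with $2^j$ small to $\|\mathcal{D}(t)\Delta_j g\|_{L^p} \lesssim \langle t\rangle^{-\frac{n}{2}(1/q-1/p)} e^{-c t 2^{2j}} \|\Delta_j g\|_{L^q}$, and the elementary inequality $2^{j(s_1-s_2)} e^{-ct 2^{2j}} \lesssim \langle t\rangle^{-(s_1-s_2)/2}$ (valid because $s_1 \ge s_2$, since $u^{s_1-s_2} e^{-cu^2}$ is bounded on $u \ge 0$) supplies exactly the missing smoothing factor after multiplying by $2^{js_1}$ and taking $\ell^2$ in $j$. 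The narrow band of transition frequencies $\eta \le |\xi| \le 25/24$, where $\lambda_-$ is not comparable to $|\xi|^2$, consists of only finitely many dyadic scales and can be absorbed into either estimate.

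The bound for $\partial_t \mathcal{D}(t) g$ follows by the same argument applied to the derivative multiplier, which at low frequency carries an extra factor essentially of size $\lambda_-(\xi) \sim |\xi|^2$ and hence $2^{2j}$ on a shell $|\xi|\sim 2^j$, contributing the additional $\langle t\rangle^{-1}$ decay via the same Gaussian absorption; at high frequency it carries a factor comparable to $\sqrt{|\xi|^2-1/4}$, which accounts for the shift from $\beta-1$ to $\beta$ in the required Besov index. The principal obstacle is the low-frequency refinement: establishing the uniform pointwise heat-like bound on each dyadic shell and organizing the dyadic estimates so as to retain the full $(s_1-s_2)/2$ power of $\langle t\rangle^{-1}$ after $\ell^2$-summation in $j$. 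Once the pointwise bound and the elementary inequality $u^{s_1-s_2} e^{-cu^2} \lesssim 1$ are in place, the remainder is a routine reorganization of the dyadic $L^p$ estimates.
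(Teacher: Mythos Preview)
Your proposal is correct and follows essentially the same route as the paper: reduce to dyadic blocks via $\Delta_k \mathcal{D}(t) = \mathcal{D}(t)\Delta_k$, treat the high-frequency pieces ($k\ge 0$) by the scalar high-frequency bound together with Bernstein to produce the factor $2^{k(\beta-1)}$, and treat the low-frequency pieces ($k\le 0$) by extracting the Gaussian behaviour $e^{-ct|\xi|^2}$ of the symbol and using $2^{k(s_1-s_2)}e^{-ct2^{2k}}\lesssim \langle t\rangle^{-(s_1-s_2)/2}$ to recover the extra smoothing. The paper states the two dyadic inequalities as lemmas and defers their proofs to \cite[Propositions~2.4 and~2.5]{IkInOkWa19}, which carry out precisely the heat-type analysis you describe. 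The only item missing from your outline is the preliminary verification that $\mathcal{D}(t)g\in\mathcal{S}'_h(\mathbb{R}^n)$ whenever $g\in\mathcal{S}'_h(\mathbb{R}^n)$, which the paper checks first via Young's inequality applied to $\chi_{\le a}(\nabla)\mathcal{D}(t)g$; this is a one-line technicality needed so that the Besov norm on the left is genuinely a norm.
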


\begin{remark}\label{rem:lplq}
\textup{(i)}
By the Young inequality,
the term
$\left\| \chi_{\leq 1}(\nabla) g\right\|_{\dot{B}^{s_2}_{q,2}}$
in the above estimates can be also replaced by
$\left\| g\right\|_{\dot{B}^{s_2}_{q,2}}$.

\noindent
\textup{(ii)}
Proposition \ref{prop:lplq} can be proved in almost the same way as
\cite[Theorem 1.1]{IkInOkWa19}.
We will give a proof of Proposition \ref{prop:lplq}
in Appendix A.

\noindent
\textup{(iii)}
The third parameter of the Besov norms is not necessarily $2$, and it can be
replaced by any number in $[1,\infty]$
as long as the exponents on both sides are the same.
In this paper, we only use the above version.
\end{remark}

\begin{definition}\label{def:norm}
Let
$s > 0, r \in (2,\infty)$ and
$T \in (0,\infty]$.
Define
\begin{align}
    X(T) = \left\{
    \phi \in L^{\infty}(0,T; \dot{\mathcal{B}}^0_{r,2}(\mathbb{R}^n)) ;
    \| \phi \|_{X(T)} < \infty
    \right\},
\end{align}
where
\begin{align}
    \|\phi\|_{X(T)}
    &:= \esssup _{0 < t<T}
    \left\{ \langle t\rangle^{\frac{s}{2}-\frac{n}{2}\left(\frac{1}{2}-\frac{1}{r}\right)}
    \left\| \phi(t) \right\|_{\dot{B}^{s}_{2,2}}
    + \|\phi(t)\|_{\dot{B}^{0}_{r,2}} \right\}.
\end{align}
Let
\begin{align}
    \eta := \frac{s-1}{2}+\frac{n}{2}\left(\frac{p}{r}-\frac{1}{2}\right),\quad
    \sigma_{1}:=\max \left\{1, \frac{r}{p}\right\} + \varepsilon,
    \quad
    \sigma_{2}:=
    \begin{dcases}
    r & (2 s \geq n), \\
    \min \left\{r, \frac{2 n}{p(n-2s)}\right\} & (2 s<n),
    \end{dcases}
\end{align}
and
$\varepsilon>0$
is a sufficiently small constant so that
$\sigma_1 < \sigma_2$
holds
(see Remark \ref{rem:sigma1:sigma2} (i) below).
We also define
\begin{align}
    Y(T) = \left\{ \psi \in L^{\infty}(0,T;
     \dot{\mathcal{B}}^0_{\sigma_1,2}(\mathbb{R}^n))
    ; \| \psi \|_{Y(T)} < \infty \right\},
\end{align}
where
\begin{align}
    \| \psi \|_{Y(T)}
    &:=
    \begin{dcases}
    \esssup _{0 < t<T} \left\{
    \langle t\rangle^{\eta} \left\| \chi_{>1}(\nabla) \psi(t)\right\|_{\dot{B}^{s-1}_{2,2}}
    +\max_{\gamma \in \left[\sigma_{1}, \sigma_{2}\right]}
    \langle t\rangle^{\frac{n}{2}\left(\frac{p}{r}-\frac{1}{\gamma}\right)} \|\psi(t)\|_{\dot{B}^0_{\gamma,2}} \right\}
    &(0 < s \le 1),\\
    \esssup _{0 < t<T}
    \left\{
    \langle t\rangle^{\eta} \left\| \psi(t) \right\|_{\dot{B}^{s-1}_{2,2}}
    +\max_{\gamma \in\left[\sigma_1, \sigma_{2}\right]}
    \langle t\rangle^{\frac{n}{2}\left(\frac{p}{r}-\frac{1}{\gamma}\right)}\|\psi(t)\|_{\dot{B}^0_{\gamma,2}} \right\}
    &(s>1).
    \end{dcases}
\end{align}
\end{definition}

\begin{remark}\label{rem:sigma1:sigma2}
{\rm (i)}
The assumptions of Theorem \ref{thm:lwp} imply 
$r < \frac{2n}{n-2s}$,
$p < 1+ \frac{n}{n-2s}$,
and $s > 0$,
which ensure
$\sigma_1 < \sigma_2$,
provided that
$\varepsilon$
is sufficiently small.

\noindent
{\em (ii)}
Adding
$\varepsilon$
in the definition of $\sigma_1$ above is needed for
the proof of Lemmas \ref{lem:Duha} and \ref{lem:nonlin}.
\end{remark}

In the following,
we introduce some estimates
which will be used throughout
this paper.

%%%%%%%%%%%%%%%%%%%%%%%%%%%%%%%%%%%%%%%
\begin{lemma}\label{lem:int}
Let
$T \in (0,\infty]$,
$r \in (2,\infty)$, and $s > n \left( \frac{1}{2} - \frac{1}{r} \right)$.
Assume
$q \in [1,\infty]$,
$\alpha \in \mathbb{R}$,
and
$\theta \in [0,1]$
satisfy
\begin{align}
    \frac{n}{q}-\alpha=(1-\theta) \frac{n}{r}+\theta\left(\frac{n}{2}-s\right), \quad \alpha \leq \theta s.
\end{align}
Then,
for any $\phi \in X(T)$, we have
$\phi(t) \in \dot{\mathcal{B}}^{\alpha}_{q,2}(\mathbb{R}^n)$ for almost every
$t \in (0,T)$
and
\begin{align}
    \esssup_{0<t<T} \,\langle t\rangle^{\frac{n}{2}\left(\frac{1}{r}-\frac{1}{q}\right)+\frac{\alpha}{2}}\|\phi(t)\|_{ \dot{B}^{\alpha}_{q,2}}
    \lesssim
    \|\phi\|_{X(T)},
\end{align}
where the implicit constant is independent of $\phi$.
\end{lemma}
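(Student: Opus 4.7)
My plan is to combine a pointwise-in-$t$ Littlewood--Paley interpolation between the two Besov spaces appearing in the $X(T)$-norm with the Sobolev-type embedding for homogeneous Besov spaces (fact~(5) in the preceding list). Introduce the auxiliary parameters
\[
    \tilde\alpha := \theta s, \qquad \frac{1}{\tilde q} := \frac{1-\theta}{r} + \frac{\theta}{2},
\]
so that $\tilde q\in[2,r]\subset(1,\infty)$. From the hypothesis $\frac{n}{q}-\alpha=(1-\theta)\frac{n}{r}+\theta(\frac{n}{2}-s)$ one reads off
$\frac{n}{\tilde q}-\tilde\alpha=\frac{n}{q}-\alpha$, and combined with $\alpha\le\theta s=\tilde\alpha$ this forces $\tilde q\le q$. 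Hence the Sobolev-type embedding $\dot{\mathcal{B}}^{\tilde\alpha}_{\tilde q,2}(\mathbb{R}^n)\hookrightarrow\dot{\mathcal{B}}^{\alpha}_{q,2}(\mathbb{R}^n)$ applies.

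Since $\|\phi\|_{X(T)}<\infty$, for almost every $t\in(0,T)$ we have $\phi(t)\in\dot{\mathcal{B}}^0_{r,2}\cap\dot{\mathcal{B}}^s_{2,2}$. For each $j\in\mathbb{Z}$, log-convexity of the $L^p$-norms gives
\[
    \|\Delta_j\phi(t)\|_{L^{\tilde q}}\le \|\Delta_j\phi(t)\|_{L^r}^{1-\theta}\|\Delta_j\phi(t)\|_{L^2}^{\theta}.
\]
Multiplying by $2^{j\tilde\alpha}=(2^{js})^{\theta}$ and summing via the discrete H\"older inequality in the $\ell^2$-sum over $j$, with conjugate exponents $\frac{1}{1-\theta}$ and $\frac{1}{\theta}$, yields the interpolation bound
\[
    \|\phi(t)\|_{\dot B^{\tilde\alpha}_{\tilde q,2}} \le \|\phi(t)\|_{\dot B^0_{r,2}}^{1-\theta}\|\phi(t)\|_{\dot B^s_{2,2}}^{\theta}
\]
(with the endpoint cases $\theta\in\{0,1\}$ being trivial). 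Chaining this with the embedding from the first step produces the same estimate with $(\tilde\alpha,\tilde q)$ replaced by $(\alpha,q)$.

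Finally, insert the $X(T)$-bounds $\|\phi(t)\|_{\dot B^0_{r,2}}\le\|\phi\|_{X(T)}$ and $\|\phi(t)\|_{\dot B^s_{2,2}}\le \langle t\rangle^{-s/2+(n/2)(1/2-1/r)}\|\phi\|_{X(T)}$ to obtain, for a.e.\ $t$,
\[
    \|\phi(t)\|_{\dot B^{\alpha}_{q,2}}\lesssim \langle t\rangle^{\theta\,[\,-s/2+(n/2)(1/2-1/r)\,]}\,\|\phi\|_{X(T)}.
\]
A short arithmetic check using $\frac{n}{q}-\alpha-\frac{n}{r}=\theta[\frac{n}{2}-\frac{n}{r}-s]$ identifies the exponent of $\langle t\rangle$ with $-\frac{n}{2}(\frac{1}{r}-\frac{1}{q})-\frac{\alpha}{2}$, which is exactly the claimed weight. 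The measurability statement $\phi(t)\in\dot{\mathcal{B}}^{\alpha}_{q,2}$ for a.e.\ $t$ is automatic from this pointwise bound. The proof is essentially bookkeeping; the only points that require care are verifying $\tilde q\in[2,r]$ so that the Besov embedding is admissible, checking the endpoint $\theta\in\{0,1\}$ separately, and confirming the exponent arithmetic — there is no deeper obstacle.
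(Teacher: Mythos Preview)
Your proof is correct. The paper's own ``proof'' is a single-sentence citation to the Gagliardo--Nirenberg inequality in homogeneous Besov spaces (\cite[Theorem 2.1]{HaMoOzWa11}), so there is no substantive argument to compare against; your write-up simply supplies a self-contained derivation of the needed interpolation inequality $\|\phi\|_{\dot B^{\tilde\alpha}_{\tilde q,2}}\le\|\phi\|_{\dot B^0_{r,2}}^{1-\theta}\|\phi\|_{\dot B^s_{2,2}}^{\theta}$ via H\"older at the Littlewood--Paley level, followed by the Sobolev embedding (fact~(5)). The auxiliary parameters $(\tilde\alpha,\tilde q)$, the verification $\tilde q\in[2,r]$, and the exponent arithmetic are all correct. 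This is more elementary and keeps the argument self-contained, which is a mild advantage over invoking an external reference; conversely, the paper's approach makes clear that the lemma is nothing more than a standard Gagliardo--Nirenberg interpolation.
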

%%%%%%%%%%%%%%%%%%%%%%%%%%%%%%%%%%%
This lemma immediately follows from
\cite[Theorem 2.1]{HaMoOzWa11}.

In the estimates of nonlinearity,
we employ the following
fractional Leibniz rule.

\begin{lemma}[Fractional Leibniz rule]\label{lem:frac:Leib}
Let
$\alpha>0$,
$1 \leq r, \sigma, p_{1}, p_{2}, q_{1}, q_{2} \leq \infty$
and
$ \frac{1}{r}
=\frac{1}{p_{1}}+\frac{1}{q_{1}}
=\frac{1}{p_{2}}+\frac{1}{q_{2}}$.
In addition, we assume
$r, q_1, q_2 < \infty$.
Then,
for any
$f \in \dot{\mathcal{B}}_{p_{1}, \sigma}^{\alpha}(\mathbb{R}^n) \cap L^{q_{2}}(\mathbb{R}^n)$
and
$g \in \dot{\mathcal{B}}_{p_{2}, \sigma}^{\alpha}(\mathbb{R}^n) \cap L^{q_{1}}(\mathbb{R}^n)$,
we have the estimate
\begin{align}
    \|f g\|_{\dot{B}_{r, \sigma}^{\alpha}} 
    \lesssim
    \|f\|_{\dot{B}_{p_{1}, \sigma}^{\alpha}}\|g\|_{L^{q_{1}}}
    + \|g\|_{\dot{B}_{p_{2}, \sigma}^{\alpha}}
    \|f\|_{L^{q_{2}}},
\end{align}
where the implicit constant is independent of $f, g$.
\end{lemma}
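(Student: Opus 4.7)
The plan is to apply Bony's paraproduct decomposition, writing
\begin{align}
fg = T_f g + T_g f + R(f,g),
\end{align}
where $T_f g := \sum_j \Delta_{\le j-3} f \cdot \Delta_j g$, $T_g f$ is defined symmetrically, and $R(f,g) := \sum_{|j-k|\le 2} \Delta_j f \cdot \Delta_k g$ is the frequency-balanced remainder. Each piece is estimated in $\dot{B}^\alpha_{r,\sigma}$ separately: the paraproducts via the annular Fourier support of each summand, and the remainder via a convolution argument exploiting $\alpha>0$. Since the final bounds will be $L^r$-type with $r<\infty$, the series converge in $L^r \subset \mathcal{S}'_h(\mathbb{R}^n)$ by basic fact (2) of the paper, so $fg$ is a well-defined element of $\mathcal{S}'_h$ on which the Besov seminorm is meaningful.

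For the paraproduct $T_f g$, the summand $\Delta_{\le j-3} f \cdot \Delta_j g$ has Fourier support in an annulus $\{|\xi|\sim 2^j\}$, so $\Delta_k T_f g$ retains only the $O(1)$ many summands with $|k-j|\le C$. Combining H\"{o}lder's inequality on the exponents $q_2, p_2$ with the $L^{q_2}$-boundedness of the low-frequency cutoff $\Delta_{\le j-3}$ gives
\begin{align}
2^{k\alpha}\|\Delta_k T_f g\|_{L^r} \lesssim \|f\|_{L^{q_2}} \sum_{|j-k|\le C} 2^{j\alpha}\|\Delta_j g\|_{L^{p_2}},
\end{align}
and taking $\ell^\sigma_k$ yields $\|T_f g\|_{\dot{B}^\alpha_{r,\sigma}} \lesssim \|f\|_{L^{q_2}}\|g\|_{\dot{B}^\alpha_{p_2,\sigma}}$. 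Swapping the roles of $f,g$ and using the exponents $q_1, p_1$ produces the symmetric bound $\|T_g f\|_{\dot{B}^\alpha_{r,\sigma}} \lesssim \|g\|_{L^{q_1}}\|f\|_{\dot{B}^\alpha_{p_1,\sigma}}$.

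For the remainder, the Fourier support of $\Delta_j f\cdot \Delta_k g$ with $|j-k|\le 2$ lies in a ball $\{|\xi|\lesssim 2^j\}$ rather than an annulus, so $\Delta_m$ of each summand survives only for $j\ge m-C$. Grouping as $\sum_j \Delta_j f\cdot \widetilde{\Delta}_j g$ and applying H\"{o}lder with exponents $p_1, q_1$ together with $\|\widetilde{\Delta}_j g\|_{L^{q_1}}\lesssim \|g\|_{L^{q_1}}$ yields
\begin{align}
2^{m\alpha}\|\Delta_m R(f,g)\|_{L^r} \lesssim \|g\|_{L^{q_1}} \sum_{j\ge m-C}2^{(m-j)\alpha}\bigl(2^{j\alpha}\|\Delta_j f\|_{L^{p_1}}\bigr).
\end{align}
Since $\alpha>0$, the kernel $2^{(m-j)\alpha}$ on $\{j\ge m-C\}$ is summable in $j-m$, and Young's convolution inequality in $\ell^\sigma$ controls the remainder by $\|g\|_{L^{q_1}}\|f\|_{\dot{B}^\alpha_{p_1,\sigma}}$. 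Combining the three contributions completes the proof. The main obstacle I anticipate is this remainder estimate: one must use $\alpha>0$ precisely to produce the geometric decay $2^{(m-j)\alpha}$ that drives the convolution argument, since at $\alpha=0$ the lemma fails in general; a secondary technical point is ensuring that the paraproduct sums converge in $\mathcal{S}'_h$ under the Bahouri--Chemin--Danchin framework, which is handled by the $L^r$-estimates above together with $r<\infty$.
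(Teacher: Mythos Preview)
Your proposal is correct and follows essentially the same approach as the paper: the paper states that the estimate is proved via the paraproduct decomposition exactly as you outline (referring to \cite{KaKoSh19} for the standard estimates), and devotes Appendix~\ref{app:FraLei} to the technical point you flag, namely that $fg\in\mathcal{S}'_h(\mathbb{R}^n)$ and that Bony's decomposition converges in $\mathcal{S}'(\mathbb{R}^n)$ under the BCD framework. Your one-line justification of this convergence (``the series converge in $L^r$'') is a bit optimistic for the low-frequency tails, and the paper's Appendix~\ref{app:FraLei} handles this more carefully by splitting $f,g$ into low/high frequency parts and tracking each piece separately; otherwise the arguments coincide.
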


\begin{remark}\label{rem:frac:Leib}
\textup{(i)}
Under the assumptions of Lemma \ref{lem:frac:Leib}, we can see that
$fg \in \mathcal{S}'(\mathbb{R}^n)$ (see Appendix \ref{app:FraLei}).
Thus, the left-hand side of the above estimate makes sense as a seminorm of $fg \in \mathcal{S}'(\mathbb{R}^n)$.

\noindent
\textup{(ii)}
When $r < \infty$ and $(q_1, q_2) \neq (\infty, \infty)$, we can prove
$f g \in \mathcal{S}_h'(\mathbb{R}^n)$.
This will be shown in Appendix \ref{app:FraLei}.
We remark that the condition $(q_1, q_2) \neq (\infty, \infty)$ is necessary,
since $f = e^{ix\cdot \xi_0}$, $g = e^{-ix\cdot \xi_0}$ with $\xi_0 \neq 0$
satisfy $fg=1 \notin \mathcal{S}_h'(\mathbb{R}^n)$.

\noindent
\textup{(iii)}
The proof of Lemma \ref{lem:frac:Leib} is based on the paraproduct decomposition.
Under the assumptions of Lemma \ref{lem:frac:Leib} with $r, q_1, q_2 < \infty$,
we can prove the paraproduct decomposition in $\mathcal{S}'(\mathbb{R}^n)$.
We will also give its proof of paraproduct decomposition in Appendix \ref{app:FraLei}.
\end{remark}

The proof of the estimate above is the same as
\cite[Proposition 2.2]{KaKoSh19}
and we omit the detail.

Finally, we show the measurability of the function
$\mathcal{D}(t-\tau) \psi (\tau)$
for the Bochner integral of the Duhamel term to make sense.
%%%%%%%%%%%%%%%%%%%%%%%%%%%%%
\begin{lemma}\label{lem:meas:psi}
For $\psi \in Y(T)$,
$r \in (2,\infty)$,
and $t > 0$,
the function
$\tau \mapsto \mathcal{D}(t-\tau) \psi(\tau)$
is strongly measurable from
$[0,t] \to \dot{\mathcal{B}}^0_{r,2}(\mathbb{R}^n) \cap \dot{\mathcal{B}}^s_{2,2}(\mathbb{R}^n)$.
\end{lemma}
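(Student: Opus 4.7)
The plan is to apply the Pettis theorem (paper fact 10) after establishing that the target space $Z := \dot{\mathcal{B}}^0_{r,2}(\mathbb{R}^n) \cap \dot{\mathcal{B}}^s_{2,2}(\mathbb{R}^n)$ is separable: fact 1 gives separability of $\mathcal{S}(\mathbb{R}^n)$, and fact 3 gives density of $\mathcal{S}_*(\mathbb{R}^n)$ in each factor since $r, 2 \in [1,\infty)$. Strong measurability of $F(\tau) := \mathcal{D}(t-\tau)\psi(\tau)$ into $Z$ therefore reduces to verifying pointwise a.e.\ approximability by strongly measurable functions (equivalently, weak measurability). As a preliminary step, I would show that for any fixed $g \in \mathcal{S}_*(\mathbb{R}^n)$ the map $\tau \mapsto \mathcal{D}(t-\tau) g$ is continuous from $[0,t]$ into $Z$. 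This uses the formula $\mathcal{D}(s) g = e^{-s/2} \mathcal{F}^{-1}[L(s,\xi) \hat{g}(\xi)]$, the fact that $L(s,\xi)$ is entire in $s$ for each fixed $\xi$ (via the power series of $\sinh$ and $\sin$ applied to the expressions in \eqref{eq:L}), and the compact Fourier support of $g$ bounded away from the origin, which together make the action a Schwartz function varying continuously in $s$.

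Next, since $\psi \in Y(T) \subset L^{\infty}(0,T; \dot{\mathcal{B}}^0_{\sigma_1,2}(\mathbb{R}^n))$ is strongly measurable into $\dot{\mathcal{B}}^0_{\sigma_1,2}$ and $\mathcal{S}_*$ is dense there by fact 3, I would approximate $\psi$ pointwise a.e.\ by simple functions $\psi_n(\tau) = \sum_j g_j^{(n)} \mathbf{1}_{E_j^{(n)}}(\tau)$ with $g_j^{(n)} \in \mathcal{S}_*(\mathbb{R}^n)$. The finite sums $\mathcal{D}(t-\tau)\psi_n(\tau) = \sum_j \mathbf{1}_{E_j^{(n)}}(\tau) \mathcal{D}(t-\tau) g_j^{(n)}$ are then strongly measurable into $Z$ by the preliminary continuity. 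Applying Proposition \ref{prop:lplq} to $\psi_n(\tau) - \psi(\tau)$, using the a.e.\ membership of $\psi(\tau)$ in the richer intersection $\dot{\mathcal{B}}^0_{\sigma_1,2}(\mathbb{R}^n) \cap \dot{\mathcal{B}}^{s-1}_{2,2}(\mathbb{R}^n)$ guaranteed by the $Y(T)$-norm, I expect to conclude $\mathcal{D}(t-\tau)\psi_n(\tau) \to F(\tau)$ in $Z$ for a.e.\ $\tau$, giving the desired strong measurability of $F$ as an a.e.\ pointwise limit of strongly measurable functions.

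The main obstacle will be controlling the high-frequency contribution in this limit: Proposition \ref{prop:lplq} bounds $\|\mathcal{D}(t-\tau)(\psi_n - \psi)\|_Z$ not only by $\|\psi_n - \psi\|_{\dot{\mathcal{B}}^0_{\sigma_1,2}}$ (which converges by construction) but also by a high-frequency Besov norm of the difference which is not automatic from low-frequency convergence. I plan to address this by refining the simple-function approximation so that $g_j^{(n)}$ approaches $\psi(\tau)$ simultaneously in $\dot{\mathcal{B}}^0_{\sigma_1,2}$ and in the relevant high-frequency norm, exploiting density of $\mathcal{S}_*$ in each space together with the $Y(T)$-information on $\psi$. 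A cleaner alternative, which avoids these estimates entirely, is a direct weak-measurability argument: for $\varphi \in \mathcal{S}_*$, self-adjointness of the real, radial Fourier multiplier $\mathcal{D}(t-\tau)$ gives $\langle F(\tau), \varphi\rangle = \langle \psi(\tau), \mathcal{D}(t-\tau)\varphi\rangle$, and by the preliminary step $\tau \mapsto \mathcal{D}(t-\tau)\varphi$ is continuous into the dual of $\dot{\mathcal{B}}^0_{\sigma_1,2}$; pairing with the strongly measurable $\psi$ yields a measurable scalar function, and density of $\mathcal{S}_*$ in $Z^* \simeq \dot{\mathcal{B}}^0_{r',2}+\dot{\mathcal{B}}^{-s}_{2,2}$ (via fact 9 and fact 3) extends weak measurability to all of $Z^*$.
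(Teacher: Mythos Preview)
Your primary approach---approximate $\psi$ by step functions, use continuity of $\tau\mapsto\mathcal{D}(t-\tau)g$ for fixed $g$, then pass to the limit via the linear estimates---is exactly the route the paper takes. The paper resolves the high-frequency obstacle you flag by approximating $\psi$ in the intersection $\dot{\mathcal{B}}^0_{\sigma_1,2}\cap\dot{\mathcal{B}}^0_{\sigma_2,2}$ and invoking the Sobolev embeddings from the proof of Lemma~\ref{lem:Duha} (specifically those under assumption~(iii) of Theorem~\ref{thm:lwp}) to bound $\|\chi_{>1}(\nabla)\psi\|_{\dot{B}^{\beta-1}_{r,2}}$ by $\|\psi\|_{\dot{B}^0_{\sigma_2,2}}$; this is one concrete instance of your plan to ``refine the approximation to converge simultaneously in the relevant norms.'' Note that the paper's written proof only treats measurability into $\dot{\mathcal{B}}^0_{r,2}$ explicitly, leaving the $\dot{\mathcal{B}}^s_{2,2}$ component implicit; your write-up is more complete on that point.

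Your alternative via self-adjointness and Pettis is a genuinely different and somewhat cleaner route that the paper does not pursue. It sidesteps the operator-boundedness bookkeeping entirely. One caution: the identification $Z^*\simeq\dot{\mathcal{B}}^0_{r',2}+\dot{\mathcal{B}}^{-s}_{2,2}$ via fact~9 requires $-\tfrac{n}{2}<s<\tfrac{n}{2}$, which is not assumed in Theorem~\ref{thm:lwp}. You can still push the argument through by testing only against $\varphi\in\mathcal{S}_*$ (which is norming for $Z$ once you know $F(\tau)\in Z$ a.e.\ from Proposition~\ref{prop:lplq}) and using that pointwise limits of measurable scalar functions are measurable, but the duality step as stated needs this adjustment.
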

%%%%%%%%%%%%%%%%%%%%%%%%%%%%%%
Moreover, to apply the above lemma for the nonlinearity $\mathcal{N}(u)=u^p$,
we need the following.
%%%%%%%%%%%%%%%%%%%%%%%%%%%%%
\begin{lemma}\label{lem:meas:N}
Under the assumption \eqref{assum:N},
for $u \in X(T)$,
the function
$u^p : [0,T] \to \dot{\mathcal{B}}^{0}_{\sigma_1,2}(\mathbb{R}^n)$
is strongly measurable.
\end{lemma}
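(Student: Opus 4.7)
My strategy is to apply Pettis' theorem (item~(10)) directly to the map $u^p : [0, T] \to \dot{\mathcal{B}}^0_{\sigma_1, 2}(\mathbb{R}^n)$. The target is a separable Banach space by items~(3) and~(4) (since $0 < n/\sigma_1$ and $\mathcal{S}_{\ast}$ is dense in it), and its dual is $\dot{\mathcal{B}}^0_{\sigma_1', 2}$ by item~(9); it therefore suffices to check that $t \mapsto \langle u^p(t), \varphi\rangle$ is measurable for every $\varphi \in \dot{\mathcal{B}}^0_{\sigma_1', 2}$.

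From $u \in X(T) \subset L^\infty(0, T; \dot{\mathcal{B}}^0_{r, 2})$ and the embedding $\dot{\mathcal{B}}^0_{r, 2} \hookrightarrow L^r(\mathbb{R}^n)$ (item~(7), valid since $r > 2$), $u : [0, T] \to L^r(\mathbb{R}^n)$ is strongly measurable, and hence admits a jointly measurable representative on $[0, T] \times \mathbb{R}^n$; so does $u^p$. Combining the $X(T)$-control of $\|u(t)\|_{\dot{B}^s_{2, 2}}$ with Lemma~\ref{lem:int} yields $u(t) \in L^{p\sigma_1}(\mathbb{R}^n)$ for a.e.\ $t$---the strict inequality $\sigma_1 < \sigma_2$ built into Definition~\ref{def:norm} is precisely what makes this possible---and consequently $u^p(t) \in L^{\sigma_1}(\mathbb{R}^n)$ a.e. For any $\varphi \in \mathcal{S}_{\ast}$, H\"{o}lder's inequality makes $\langle u^p(t), \varphi\rangle = \int_{\mathbb{R}^n} u(t, x)^p \varphi(x)\,dx$ finite, and Fubini's theorem ensures $t \mapsto \langle u^p(t), \varphi\rangle$ is measurable.

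For a general $\varphi \in \dot{\mathcal{B}}^0_{\sigma_1', 2}$, approximate by $\{\varphi_n\} \subset \mathcal{S}_{\ast}$ in $\dot{\mathcal{B}}^0_{\sigma_1', 2}$ (item~(3)). Provided $u^p(t) \in \dot{\mathcal{B}}^0_{\sigma_1, 2}$ for a.e.\ $t$---the quantitative content of Lemma~\ref{lem:nonlin}---the duality bound $|\langle u^p(t), \varphi - \varphi_n\rangle| \le \|u^p(t)\|_{\dot{B}^0_{\sigma_1, 2}} \|\varphi - \varphi_n\|_{\dot{B}^0_{\sigma_1', 2}}$ gives pointwise convergence $\langle u^p(t), \varphi_n\rangle \to \langle u^p(t), \varphi\rangle$. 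Hence $t \mapsto \langle u^p(t), \varphi\rangle$ is measurable as a pointwise limit, and Pettis' theorem yields the claim.

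The main obstacle I anticipate is the a.e.\ verification $u^p(t) \in \dot{\mathcal{B}}^0_{\sigma_1, 2}$: since Lemma~\ref{lem:frac:Leib} requires $\alpha > 0$, one cannot apply the fractional Leibniz rule directly at smoothness zero. The natural workaround is first to establish $\|u^p(t)\|_{\dot{B}^\alpha_{\tilde{\sigma}, 2}} < \infty$ for suitable $\tilde{\sigma} < \sigma_1$ and $\alpha = n(1/\tilde{\sigma} - 1/\sigma_1)$ by iterating Lemma~\ref{lem:frac:Leib}, then to invoke the Besov embedding in item~(5). This is the technical core of Lemma~\ref{lem:nonlin}; once it is in place, the measurability argument above closes.
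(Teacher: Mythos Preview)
Your argument is correct and follows essentially the same route as the paper: verify weak measurability against $\varphi \in \mathcal{S}_{\ast}$, extend to all of $\dot{\mathcal{B}}^0_{\sigma_1',2}$ by density using the a.e.\ membership $u^p(t) \in \dot{\mathcal{B}}^0_{\sigma_1,2}$ supplied by the pointwise estimates of Lemma~\ref{lem:nonlin}, and conclude by Pettis' theorem. The only deviation is in how you obtain measurability of $t \mapsto \langle u^p(t),\varphi\rangle$ for $\varphi \in \mathcal{S}_{\ast}$: the paper instead shows that $u \mapsto u^p$ is continuous from $\dot{\mathcal{B}}^s_{2,2}\cap\dot{\mathcal{B}}^0_{r,2}$ into a suitable $L^{\gamma}$ and invokes item~(11) (measurability of composites), whereas you pass through a jointly measurable representative and Fubini--Tonelli; both are standard and neither offers a real advantage over the other.
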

%%%%%%%%%%%%%%%%%%%%%%%%%%%%
The proofs of the above two lemmas are given in Appendix \ref{app:C}.

%%%%%%%%%%%%%%%%%%%%%%%
%%%%%%%%%%%%%%%%%%%%%%%
\section{Local and global existence}
In this section, we prove Theorems \ref{thm:lwp} and \ref{thm:gwp}.
We start with the following lemma.

%%%%%%%%%%%%%%%%%%%%%%%%%%%%%
\begin{lemma}\label{lem:Duha}
Under the assumptions of Theorem \ref{thm:lwp},
for $\psi \in Y(T)$,
we have
\begin{align}
    \left\|\int_{0}^{t} \mathcal{D}(t-\tau) \psi(\tau) d\tau \right\|_{X(T)} \lesssim \int_{0}^{T}\|\psi\|_{Y(\tau)} d \tau
\end{align}
for $0<T<1$, and
\begin{align}
    \left\|\int_{0}^{t} \mathcal{D}(t-\tau) \psi(\tau) d\tau \right\|_{X(T)} \lesssim
    \| \psi \|_{Y(T)}
    \begin{dcases}
    1&(p\ge 1+\frac{2r}{n}),\\
    \langle T \rangle^{1-\frac{n}{2r}(p-1)}
    &(1<p<1+\frac{2r}{n})
    \end{dcases}
\end{align}
for any $T>0$,
where the implicit constants are independent of $T$.
\end{lemma}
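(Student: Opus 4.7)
The strategy is to apply Minkowski's inequality to bring the Besov norms inside the time integral, use Proposition \ref{prop:lplq} pointwise in $\tau$, dominate the result by the $Y(T)$-norm of $\psi$, and finally estimate the resulting temporal convolution integrals. The measurability and Bochner-integrability needed to justify Minkowski are supplied by Lemma \ref{lem:meas:psi}. Writing $\phi(t):=\int_0^t \mathcal{D}(t-\tau)\psi(\tau)\,d\tau$, the $X(T)$-norm has two components, $\langle t\rangle^{\frac{s}{2}-\frac{n}{2}(\frac{1}{2}-\frac{1}{r})}\|\phi(t)\|_{\dot{B}^s_{2,2}}$ and $\|\phi(t)\|_{\dot{B}^0_{r,2}}$, that must be treated separately.

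For the $\dot{B}^0_{r,2}$ component, I would apply Proposition \ref{prop:lplq} with $p=r$, $s_1=s_2=0$, and $q=\sigma\in[\sigma_1,\sigma_2]$ to be chosen later. The low-frequency contribution produces
\[
\langle t-\tau\rangle^{-\frac{n}{2}(\frac{1}{\sigma}-\frac{1}{r})}\|\psi(\tau)\|_{\dot{B}^0_{\sigma,2}}\lesssim \langle t-\tau\rangle^{-\frac{n}{2}(\frac{1}{\sigma}-\frac{1}{r})}\langle\tau\rangle^{-\frac{n}{2}(\frac{p}{r}-\frac{1}{\sigma})}\|\psi\|_{Y(T)}
\]
by the $Y$-norm, while the high-frequency piece has an exponentially decaying prefactor $e^{-(t-\tau)/2}\langle t-\tau\rangle^{\delta_r}$ multiplying $\|\chi_{>1}(\nabla)\psi(\tau)\|_{\dot{B}^{\beta-1}_{r,2}}$. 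To pass from $Y(T)$ to this last seminorm, I split into the two cases of hypothesis (iii) of Theorem \ref{thm:lwp}: when $(2n-1)(\tfrac12-\tfrac1r)\le s$, the Sobolev embedding $\dot{\mathcal{B}}^{s-1}_{2,2}\hookrightarrow\dot{\mathcal{B}}^{\beta-1}_{r,2}$ applies directly; otherwise, with $\beta\le 1$, one chooses $\gamma\in[\sigma_1,\sigma_2]$ so that $\dot{\mathcal{B}}^{0}_{\gamma,2}\hookrightarrow\dot{\mathcal{B}}^{\beta-1}_{r,2}$, which is feasible precisely under the upper bound on $p$ in (iii)$_2$. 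For the $\dot{B}^s_{2,2}$ component, the same procedure with $p=2$, $s_1=s$, $s_2=0$, and $q=\sigma\in[\sigma_1,\sigma_2]\cap[1,2]$ produces the analogous low-frequency decay $\langle t-\tau\rangle^{-\frac{n}{2}(\frac{1}{\sigma}-\frac{1}{2})-\frac{s}{2}}$, together with a high-frequency term that is absorbed by the $\langle\tau\rangle^{-\eta}$ weight in $Y$.

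The remaining work is then the standard estimation of
\[
\int_0^t \langle t-\tau\rangle^{-a}\langle\tau\rangle^{-b}\,d\tau,
\]
for which I would split the integral at $\tau=t/2$ and take $\sigma=\sigma_2$ on $[0,t/2]$ (where $\langle t-\tau\rangle\sim\langle t\rangle$ and the weak time-decay of $\psi$ is enough) and $\sigma=\sigma_1$ on $[t/2,t]$ (where $\langle\tau\rangle\sim\langle t\rangle$ and one needs the better decay of $\mathcal{D}$). The $\varepsilon$ margin in $\sigma_1=\max\{1,r/p\}+\varepsilon$ guarantees strict positivity of $b=\frac{n}{2}(\frac{p}{r}-\frac{1}{\sigma_1})$, ruling out logarithmic singularities. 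Matching the resulting exponent against the $X(T)$-weight isolates the threshold $p=1+\frac{2r}{n}$: in the supercritical case $p\ge 1+\frac{2r}{n}$ the integral converges uniformly in $t$, while in the subcritical range it produces exactly the factor $\langle T\rangle^{1-\frac{n}{2r}(p-1)}$. For the short-time bound $T<1$, all time weights are of order one, so Proposition \ref{prop:lplq} yields $\|\mathcal{D}(t-\tau)\psi(\tau)\|_{\dot{B}^0_{r,2}\cap\dot{B}^s_{2,2}}\lesssim \|\psi\|_{Y(\tau)}$ pointwise, and integrating in $\tau$ gives the stated $\int_0^T\|\psi\|_{Y(\tau)}\,d\tau$ form.

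The main obstacle, as I anticipate it, is the bookkeeping across the two branches of hypothesis (iii): in the second branch one must verify that an intermediate $\gamma\in[\sigma_1,\sigma_2]$ lies in the admissible range of both Proposition \ref{prop:lplq} and the embedding $\dot{\mathcal{B}}^0_{\gamma,2}\hookrightarrow\dot{\mathcal{B}}^{\beta-1}_{r,2}$, and one must check that all exponents in the time integrals remain within the range where the standard convolution estimate applies with no logarithmic loss. The design of $\sigma_1,\sigma_2,\eta$ in Definition \ref{def:norm} is tailored so that these verifications reduce to the numerical constraints (i)--(iii) of Theorem \ref{thm:lwp}, which is where the $\varepsilon$ buffer becomes essential.
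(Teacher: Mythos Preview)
Your overall scheme---Minkowski, Proposition~\ref{prop:lplq} pointwise, control by the $Y(T)$-norm, split at $t/2$, and the two-branch treatment of hypothesis~(iii) via Sobolev embeddings---is correct and coincides with the paper's approach. The $\dot{B}^0_{r,2}$ part and the handling of the high-frequency term $\|\chi_{>1}(\nabla)\psi\|_{\dot{B}^{\beta-1}_{r,2}}$ are essentially right.

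The genuine gap is in the $\dot{B}^s_{2,2}$ estimate on $[t/2,t]$. You apply Proposition~\ref{prop:lplq} with $s_1=s$, $s_2=0$, $q=\sigma\in[\sigma_1,\sigma_2]\cap[1,2]$ uniformly, so the low-frequency kernel is $\langle t-\tau\rangle^{-a}$ with $a=\frac{n}{2}(\frac{1}{\sigma}-\frac{1}{2})+\frac{s}{2}$. For $s>2$ one has $a>1$ regardless of $\sigma$, so $\int_{t/2}^t\langle t-\tau\rangle^{-a}\,d\tau=O(1)$ and the only remaining decay is $\langle t\rangle^{-\frac{n}{2}(\frac{p}{r}-\frac{1}{\sigma})}$, which is far too weak to cancel the $X$-weight $\langle t\rangle^{\frac{s}{2}-\frac{n}{2}(\frac{1}{2}-\frac{1}{r})}$. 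Concretely, take $n=3$, $r=3$, $p=2$, $s=5$ (all hypotheses of Theorem~\ref{thm:lwp} hold, subcritical $p<1+2r/n=3$): with $\sigma=2$ you get $a=5/2$, $b=1/4$, and the product with the $X$-weight grows like $\langle t\rangle^{2}$, whereas the target is $\langle T\rangle^{1/2}$. A separate problem: when $r>2p$ one has $\sigma_1>2$, so your admissible range $[\sigma_1,\sigma_2]\cap[1,2]$ is empty.

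The paper repairs this by treating $I_2=\int_{t/2}^t$ differently according to $s$. For $s>1$ it takes $s_2=s-1$ and $q=2$ in Proposition~\ref{prop:lplq}, so the linear evolution contributes only the integrable $\langle t-\tau\rangle^{-1/2}$, and the remaining $s-1$ derivatives are supplied by the weight $\|\psi(\tau)\|_{\dot{B}^{s-1}_{2,2}}\le\langle\tau\rangle^{-\eta}\|\psi\|_{Y(T)}$ with $\eta=\frac{s-1}{2}+\frac{n}{2}(\frac{p}{r}-\frac{1}{2})$, which scales correctly with $s$. For $0<s\le 1$ it keeps $s_2=0$ but with $q=\min\{2,\sigma_2\}$, and Remark~\ref{rem:lem:Duha} verifies $a<1$ under the assumptions of Theorem~\ref{thm:lwp}. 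In short, the $\dot{B}^{s-1}_{2,2}$ component of the $Y$-norm is not merely for absorbing the exponentially damped high-frequency piece; when $s>1$ it is the decisive ingredient for the low-frequency part of $I_2$.
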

%%%%%%%%%%%%%%%%%%%%%%%%%%%%%
%%%%%%%%%%%%%%%%%%%%%%%%%%%%%
\begin{proof}
By Lemma \ref{lem:meas:psi},
the Bochner integral
$\int_0^t \mathcal{D}(t-\tau) \psi (\tau) \,d\tau$
in
$\dot{\mathcal{B}}^0_{r,2}(\mathbb{R}^n) \cap \dot{\mathcal{B}}^{s}_{2,2}(\mathbb{R}^n)$
makes sense for every $t \in [0,T]$.
Thus, it remains to prove the above two estimates.

The first assertion can be easily
obtained by slightly modifying the proof of the second one.
Thus, we only prove the second assertion.

\noindent
\textbf{(Step 1)}
First, we estimate
\begin{align}
    \langle t\rangle^{\frac{n}{2}\left(\frac{1}{r}-\frac{1}{2}\right)+\frac{s}{2}}
    \left\| \int_{0}^{t} \mathcal{D}(t-\tau) \psi(\tau) d \tau \right\|_{\dot{B}^s_{2,2}}
    &\lesssim
    \langle t\rangle^{\frac{n}{2}\left(\frac{1}{r}-\frac{1}{2}\right)+\frac{s}{2}} \int_{0}^{\frac{t}{2}}\left\| \mathcal{D}(t-\tau) \psi(\tau)\right\|_{\dot{B}^s_{2,2}} \,d \tau \\
    &\quad +
    \langle t\rangle^{\frac{n}{2}\left(\frac{1}{r}-\frac{1}{2}\right)+\frac{s}{2}} \int_{\frac{t}{2}}^t
        \left\| \mathcal{D}(t-\tau) \psi(\tau)\right\|_{\dot{B}^s_{2,2}} \,d \tau \\
    &=: I_1 + I_2.
\end{align}
For $I_1$,
applying Proposition \ref{prop:lplq} with
$p=2, q=\sigma_1, s_1 = s, s_2 = 0$
(see also Remark \ref{rem:lplq} (i)),
we have
\begin{align}
    I_1
    &\lesssim\langle t\rangle^{\frac{n}{2}\left(\frac{1}{r}-\frac{1}{2}\right)+\frac{s}{2}}
    \int_{0}^{\frac{t}{2}}
    \left(\langle t-\tau \rangle^{-\frac{n}{2}\left(\frac{1}{\sigma_{1}}-\frac{1}{2}\right)-\frac{s}{2}}
    \|\psi(\tau)\|_{\dot{B}^0_{\sigma_1,2}}\right.\\
    &\left.\qquad\qquad\qquad\qquad \qquad 
    + e^{-\frac{t-\tau}{2}}
    \langle t - \tau \rangle^{\delta_2}
    \left\| \chi_{>1}(\nabla) \psi(t)\right\|_{\dot{B}^{s-1}_{2,2}}
    \right) \,d \tau \\
    &\lesssim
    \langle t\rangle^{-\frac{n}{2}\left(\frac{1}{\sigma_{1}}-\frac{1}{r}\right)}
    \int_{0}^{\frac{t}{2}}
    \left(
        \langle\tau\rangle^{-\frac{n}{2}\left(\frac{p}{r}-\frac{1}{\sigma_1}\right)}
        +e^{-\frac{t-\tau}{4}}
        \langle t-\tau \rangle^{\delta_2}
        \langle\tau\rangle^{-\eta}
    \right) \,d \tau\|\psi\|_{Y(T)} \\
    &\lesssim
    \| \psi \|_{Y(T)}
    \langle T\rangle^{\left( 1-\frac{n}{2 r}(p-1) \right)_+},
\end{align}
where we write
$\left( 1-\frac{n}{2 r}(p-1) \right)_+ =
\max\{0, 1-\frac{n}{2 r}(p-1)\}$.
To estimate $I_2$, we first consider the case
$s>1$.
By Proposition \ref{prop:lplq} with
$p=q=2, s_1 = s, s_2 = s-1$,
we deduce
\begin{align}
    I_2
    &\lesssim
    \langle t\rangle^{\frac{n}{2}\left(\frac{1}{r}-\frac{1}{2}\right)+\frac{s}{2}}
    \int_{\frac{t}{2}}^{t} 
    \left(
        \langle t-\tau \rangle^{-\frac{1}{2}}
        \left\| \psi(\tau) \right\|_{\dot{B}^{s-1}_{2,2}}
        + e^{-\frac{t-\tau}{2}} 
        \langle t-\tau \rangle^{\delta_2}
        \left\| \psi(\tau) \right\|_{\dot{B}^{s-1}_{2,2}} 
    \right) \, d \tau \\
    &\lesssim
    \langle t\rangle^{\frac{n}{2}\left(\frac{1}{r}-\frac{1}{2}\right)+\frac{s}{2}}
    \int_{\frac{t}{2}}^{t}
    \langle t-\tau \rangle^{-\frac{1}{2}}
    \langle\tau\rangle^{-\eta} \,d\tau
    \|\psi\|_{Y(T)} \\
    &\lesssim
    \langle t\rangle^{-\frac{n}{2r}\left( p-1 \right)+\frac{1}{2}}
    \int_{\frac{t}{2}}^{t}
    \langle t-\tau \rangle^{-\frac{1}{2}}
    \,d \tau \|\psi\|_{Y(T)} \\
    &\lesssim
    \| \psi \|_{Y(T)}
    \langle T\rangle^{\left( 1-\frac{n}{2 r}(p-1) \right)_+}.
\end{align}
Next, we treat the case $0 < s \le 1$.
We apply
Proposition \ref{prop:lplq}
with
$p=2, s_1 = s, s_2 = 0$,
and
$q = \min\{2, \sigma_2\}$.
Then,
we have
$\frac{n}{2}\left( \frac{1}{q}-\frac{1}{2} \right)+\frac{s}{2} <1$
(see Remark \ref{rem:lem:Duha} below),
and hence,
\begin{align}
    I_2
    &\lesssim
     \langle t\rangle^{\frac{n}{2}\left(\frac{1}{r}-\frac{1}{2}\right)+\frac{s}{2}} \int_{\frac{t}{2}}^{t}
     \left(
        \langle t-\tau \rangle^{
            -\frac{n}{2}\left( \frac{1}{q}-\frac{1}{2}
        \right)-\frac{s}{2}}
        \left\| \psi(\tau) \right\|_{\dot{B}^0_{q,2}}
    \right.\\
    &\qquad\qquad\qquad\qquad\qquad 
    \left.
        +
        e^{-\frac{t-\tau}{2}}
        \langle t-\tau \rangle^{\delta_2}
        \left\| \chi_{>1}(\nabla) \psi(\tau) \right\|_{\dot{B}^{s-1}_{2,2}}
    \right) \,d\tau\\
    &\lesssim
    \langle t\rangle^{\frac{n}{2}\left(\frac{1}{r}-\frac{1}{2}\right)+\frac{s}{2}}
    \int_{\frac{t}{2}}^{t}
    \left(
        \langle t-\tau \rangle^{
            -\frac{n}{2}\left( \frac{1}{q}-\frac{1}{2}
        \right)-\frac{s}{2}}
        \langle \tau \rangle^{
            -\frac{n}{2}\left( \frac{p}{r}-\frac{1}{q} \right)}
    \right.\\
    &\qquad\qquad\qquad\qquad\qquad 
    \left.
        +
        e^{-\frac{t-\tau}{2}}
        \langle t-\tau \rangle^{\delta_2}
        \langle \tau \rangle^{-\eta}
    \right) \,d\tau \| \psi \|_{Y(T)}\\
    &\lesssim
    \langle t\rangle^{-\frac{n}{2r}(p-1)+1}
    \| \psi \|_{Y(T)} \\
    &\lesssim
    \| \psi \|_{Y(T)}
    \langle T\rangle^{\left( 1-\frac{n}{2 r}(p-1) \right)_+}.
\end{align}

\noindent
\textbf{(Step 2)}
Next, we estimate
\begin{align}
    \left\|\int_{0}^{t} \mathcal{D}(t-\tau) \psi(\tau) \,d \tau \right\|_{\dot{B}^0_{r,2}}
    &\le
    \int_{0}^{\frac{t}{2}} \left\| \mathcal{D}(t-\tau) \psi(\tau)\right\|_{\dot{B}^0_{r,2}} \,d \tau
    +\int_{\frac{t}{2}}^{t}\left\| \mathcal{D}(t-\tau) \psi(\tau) \right\|_{\dot{B}^0_{r,2}} \,d \tau \\
    &=: I_3 + I_4.
\end{align}
Applying Proposition \ref{prop:lplq} with
$p=r,q=\sigma_1,s_1=s_2=0$,
we have
\begin{align}
    I_3
    &\lesssim 
    \int_{0}^{\frac{t}{2}}\left(
        \langle t-\tau
        \rangle^{-\frac{n}{2}\left(\frac{1}{\sigma_{1}}-\frac{1}{r}\right)}
        \|\psi(\tau)\|_{\dot{B}^0_{\sigma_{1},2}}
        +e^{-\frac{t-\tau}{2}}
        \langle t-\tau
        \rangle^{\delta_{r}}
        \left\| \chi_{>1}(\nabla) \psi(\tau)
        \right\|_{\dot{B}^{\beta-1}_{r,2}}\right)\, d \tau.
\end{align}
Now, we estimate the second term
$\left\| \chi_{>1}(\nabla) \psi(\tau) \right\|_{\dot{B}^{\beta-1}_{r,2}}$.
From the assumption (iii) of Theorem \ref{thm:lwp}, we have
$\min \{ \frac{1}{\sigma_2}, \frac{1}{2}-\frac{s-1}{n} \} \le \frac{1}{r} - \frac{\beta-1}{n}$.
Therefore, by the Sobolev embedding, we obtain 
\begin{align}
	\| \chi_{>1}(\nabla) \psi (\tau) \|_{\dot{B}^{\beta-1}_{r,2}}
	\lesssim
			\| \chi_{>1}(\nabla) \psi(\tau) \|_{\dot{B}^{-n(\frac{1}{2}-\frac{1}{r})+(s-1)}_{r,2}}
	\lesssim
			\| \chi_{>1}(\nabla) \psi (\tau) \|_{\dot{B}^{s-1}_{2,2}}
	\lesssim \| \psi \|_{Y(T)}
\end{align}
if $\frac{1}{2}-\frac{s-1}{n} \le \frac{1}{r}-\frac{\beta-1}{n}$
(i.e., $(2n-1)(\frac{1}{2}-\frac{1}{r}) \le s$),
and
\begin{align}
	\| \chi_{>1}(\nabla) \psi (\tau) \|_{\dot{B}^{\beta-1}_{r,2}}
	\lesssim \| \chi_{>1}(\nabla) \psi(\tau) \|_{\dot{B}^{-n(\frac{1}{\sigma_2}-\frac{1}{r})}_{r,2}}
	\lesssim \| \chi_{>1}(\nabla) \psi (\tau) \|_{\dot{B}^{0}_{\sigma_2,2}}
	\lesssim \| \psi \|_{Y(T)}
\end{align}
if $\frac{1}{\sigma_2} \le \frac{1}{r} - \frac{\beta-1}{n}$
(i.e., $\beta \le 1$ and $p\le \frac{2n}{n-2s} \left( \frac{1}{r} + \frac{1-\beta}{n} \right)$ if $2s < n$).
Thus, we obtain
\begin{align}
    I_3
    &\lesssim
    \langle t\rangle^{
        -\frac{n}{2}\left(\frac{1}{\sigma_{1}}-\frac{1}{r}\right)}
    \int_{0}^{\frac{t}{2}}
    \langle\tau\rangle^{
        -\frac{n}{2}\left(\frac{p}{r}-\frac{1}{\sigma_1}\right)}
    \,d \tau \|\psi\|_{Y(T)}
    + 
    e^{-t/4}
    \int_0^{\frac{t}{2}}
     \langle t-\tau
        \rangle^{\delta_{r}} \,d\tau 
    \|\psi \|_{Y(T)}\\
    &\lesssim
    \| \psi \|_{Y(T)}
    \langle T\rangle^{\left( 1-\frac{n}{2 r}(p-1) \right)_+}.
\end{align}
Finally, for $I_4$, we apply
Proposition \ref{prop:lplq} with
$p=r, q = \sigma_1, s_1=s_2 = 0$
to obtain
\begin{align}
    I_4
    &\lesssim
    \int_{\frac{t}{2}}^{t}
    \left(
    \langle t-\tau\rangle^{
        -\frac{n}{2}\left(\frac{1}{\sigma_1}-\frac{1}{r}\right)}
    \left\|\psi(\tau) \right\|_{\dot{B}^0_{\sigma_1,2}}
    +
        e^{-\frac{t-\tau}{2}}
        \langle t-\tau\rangle^{\delta_{r}}
        \left\| \chi_{>1}(\nabla) \psi(\tau) \right\|_{\dot{B}^{\beta-1}_{r,2}}
    \right) \,d \tau \\
    &\lesssim
    \int_{\frac{t}{2}}^{t}
    \langle t-\tau\rangle^{
        -\frac{n}{2}\left(\frac{1}{\sigma_1}-\frac{1}{r}\right)}
    \langle\tau\rangle^{
        -\frac{n}{2}\left(\frac{p}{r}-\frac{1}{\sigma_1}\right)}
    \,d \tau \|\psi\|_{Y(T)}
    +
    \int_{\frac{t}{2}}^{t}
    e^{-\frac{t-\tau}{2}}
        \langle t-\tau\rangle^{\delta_{r}} \,d\tau
    \|\psi \|_{Y(T)} \\
    &\lesssim
    \| \psi \|_{Y(T)}
    \langle T\rangle^{\left( 1-\frac{n}{2 r}(p-1) \right)_+}.
\end{align}
This completes the proof.
\end{proof}
%%%%%%%%%%%%%%%%%%%%%%%%%%%%%%%%%%%%%%%%%%%%

%%%%%%%%%%%%%%%%%%%%%%%%%%%%%%%%%%%%%%%%%%
\begin{remark}\label{rem:lem:Duha}
Let us check the condition
$\frac{n}{2}\left( \frac{1}{q}-\frac{1}{2} \right)+\frac{s}{2} <1$
which appeared in the estimate of
$I_2$
of the above proof in the case
$0 < s \le 1$,
where
$q = \min\{2, \sigma_2\}$.
It is obvious when
$\sigma_2 \ge 2$,
and it suffices to consider the case
$\sigma_2 < 2$,
that is,
$2s < n$
and
$\sigma_2 = \frac{2n}{p(n-2s)}$.
In this case, the above condition is 
equivalent with
$p < 1+\frac{4}{n-2s}$,
which is assured from the assumption of
Theorem \ref{thm:lwp}.
\end{remark}
%%%%%%%%%%%%%%%%%%%%%%%%%%%%%%%%%%%%%%%%%%

Next, we give a nonlinear estimate.
%%%%%%%%%%%%%%%%%%%%%%%%%%%%%
\begin{lemma}\label{lem:nonlin}
Under the assumptions of Theorem \ref{thm:lwp},
we have
\begin{align}
    \| u^p \|_{Y(T)}
    &\lesssim \| u \|_{X(T)}^p,\\
    \| u^p - v^p \|_{Y(T)} &\lesssim \| u - v \|_{X(T)} ( \| u \|_{X(T)} + \| v \|_{X(T)} )^{p-1}
\end{align}
for
$u, v \in X(T)$,
where the implicit constants are independent of
$T > 0$.
\end{lemma}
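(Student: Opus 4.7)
My plan is first to reduce the difference estimate to the product estimate via the identity
\[
    u^p - v^p = (u-v)\sum_{k=0}^{p-1} u^k v^{p-1-k},
\]
so that both inequalities follow from the multilinear bound $\|w_0 w_1 \cdots w_{p-1}\|_{Y(T)} \lesssim \prod_{j=0}^{p-1} \|w_j\|_{X(T)}$ for arbitrary $w_j \in X(T)$. For brevity I describe the proof of the first (diagonal) inequality; the multilinear version is established by the same argument with the factors $u$ replaced by distinct $w_j$.

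The $Y(T)$ norm naturally splits into a smoothness part and a family of decay parts, and I will treat them separately. For the smoothness part, the fractional Leibniz rule (Lemma \ref{lem:frac:Leib}) requires strictly positive regularity, so I first reduce to bounding $\|u^p\|_{\dot{B}^\alpha_{2,2}}$ with $\alpha > 0$: when $s > 1$, take $\alpha = s-1$ directly; when $0 < s \le 1$, use the high-frequency Bernstein-type inequality $\|\chi_{>1}(\nabla) f\|_{\dot{B}^{s-1}_{2,2}} \lesssim \|f\|_{\dot{B}^s_{2,2}}$ (valid because $2^{j(s-1)} \le 2^{js}$ for $j \ge 0$) and take $\alpha = s$. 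An iterated application of Lemma \ref{lem:frac:Leib} then yields
\[
    \|u^p\|_{\dot{B}^\alpha_{2,2}} \lesssim \|u\|_{\dot{B}^\alpha_{q_1, 2}}\|u\|_{L^{(p-1)q_2}}^{p-1}, \qquad \frac{1}{2} = \frac{1}{q_1} + \frac{1}{q_2},
\]
after which Lemma \ref{lem:int} (combined with the embedding $\dot{\mathcal{B}}^0_{q, 2} \subset L^q$ for $q \ge 2$) controls each factor by a negative power of $\langle \tau \rangle$ times $\|u\|_{X(T)}$. A direct computation using $1/q_1 + 1/q_2 = 1/2$ shows that the combined time weight is bounded by $\langle \tau \rangle^{-\eta}$ (in fact by $\langle\tau\rangle^{-\eta-1/2}$ in the second case), matching the weight in the definition of $Y(T)$.

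For the decay part $\max_{\gamma \in [\sigma_1, \sigma_2]} \langle t\rangle^{n(p/r - 1/\gamma)/2}\|u^p\|_{\dot{B}^0_{\gamma, 2}}$, the zero regularity of $\dot{\mathcal{B}}^0_{\gamma, 2}$ prevents a direct use of Lemma \ref{lem:frac:Leib}. My plan is to buy a small amount of regularity through the Besov embedding $\dot{\mathcal{B}}^\alpha_{\gamma_0, 2} \subset \dot{\mathcal{B}}^0_{\gamma, 2}$ with $\gamma_0 < \gamma$ and $\alpha = n(1/\gamma_0 - 1/\gamma) > 0$, and then repeat the Leibniz + Lemma \ref{lem:int} scheme with $1/\gamma_0 = 1/q_1 + 1/q_2$. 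Again, after using the compatibility among $\gamma_0, q_1, q_2, \alpha$, the combined time weight collapses to precisely $\langle\tau\rangle^{-n(p/r-1/\gamma)/2}$, uniformly in $\gamma \in [\sigma_1, \sigma_2]$.

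The main obstacle is the exponent bookkeeping: one must verify that the parameters $(q_1, q_2, \gamma_0, \alpha)$ and the associated $\theta \in [0, 1]$ from Lemma \ref{lem:int} can be chosen consistently with the constraints $\alpha \le \theta s$ and $n/q - \alpha = (1-\theta) n/r + \theta(n/2 - s)$, uniformly in $\gamma \in [\sigma_1, \sigma_2]$ and for every admissible $(n, r, s, p)$. This is precisely where hypotheses (i)--(iii) of Theorem \ref{thm:lwp} are invoked, and it is the reason $\sigma_1$ is defined with the small offset $\varepsilon > 0$: it opens the margin in the strict inequality $\sigma_1 < \sigma_2$ in which the interpolation parameters can be chosen. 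Once this admissibility is confirmed, the two component estimates combine to yield $\|u^p\|_{Y(T)} \lesssim \|u\|_{X(T)}^p$, and the difference bound follows from the multilinear version applied to the factorization above.
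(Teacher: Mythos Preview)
Your treatment of the decay part and of the smoothness part when $s>1$ follows the paper's route and is fine. The gap is in the case $0<s\le 1$. After the high-frequency reduction $\|\chi_{>1}(\nabla)u^p\|_{\dot B^{s-1}_{2,2}}\lesssim\|u^p\|_{\dot B^{s}_{2,2}}$ you apply Lemma~\ref{lem:int} to the factor $\|u\|_{\dot B^{\alpha}_{q_1,2}}$ with $\alpha=s$; the constraint $\alpha\le\theta' s$ there forces $\theta'=1$, hence $n/q_1-s=n/2-s$, i.e.\ $q_1=2$. From $1/2=1/q_1+1/q_2$ you are left with $q_2=\infty$, so the remaining factor is $\|u\|_{L^\infty}^{p-1}$, which $X(T)$ does not control when $2s<n$ (and the embedding $\dot{\mathcal B}^0_{q,2}\subset L^q$ you invoke fails at $q=\infty$). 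The hypotheses of Theorem~\ref{thm:lwp} do allow $0<s\le1$ together with $2s<n$: for instance $n=2$, $s=1/2$, $p=2$, $r\in(2,4)$ satisfies (i)--(iii). Replacing $\alpha=s$ by some smaller $\alpha>0$ while keeping the Leibniz target $\dot B^{\alpha}_{2,2}$ does not save the argument either: combining the constraints $n/\beta_0-\alpha\ge n/2-s$ and $n/q\ge n/2-s$ with $1/2=1/\beta_0+(p-1)/q$ yields $(n/2-s)(p-1)\le s-\alpha<s$, which again fails for the example above (there $(n/2-s)(p-1)=s=1/2$).

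The paper avoids this obstruction by abandoning the Leibniz rule entirely when $0<s\le1$ (Step~4): it uses the inhomogeneous embedding chain $L^\mu(\mathbb{R}^n)\subset B^0_{\mu,2}(\mathbb{R}^n)\subset B^{s-1}_{2,2}(\mathbb{R}^n)$ for a suitable $\mu\in(1,2]$, followed by the plain H\"older inequality $\|u^p\|_{L^\mu}\le\|u\|_{L^{\mu p}}^p$ and Lemma~\ref{lem:int} at zero regularity. The existence of such a $\mu$ is precisely what the lower bound $p\ge r/2$ in assumption~(ii) (together with $p\le 1+\tfrac{2}{n-2s}$ when $2s<n$) guarantees; the strict inequality $p<\tfrac{2n}{n-2s}$ from Remark~\ref{rem:lwp}\,(i) is also used here. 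Your proof sketch needs this separate argument in the low-regularity regime.
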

%%%%%%%%%%%%%%%%%%%%%%%%%%%%%%%%
\begin{proof}
We remark that,
by Lemma \ref{lem:meas:N}, 
the function $u^p : [0,T] \to \dot{\mathcal{B}}^0_{\sigma_1,2}(\mathbb{R}^n)$
for $u \in X(T)$
is strongly measurable.
Therefore, it remains to prove the above estimates.

%%%
\noindent
\textbf{(Step 1)} Before going to prove the estimates, we give the following formal observation.
For functions $w_1, w_2, \ldots, w_p \in X(T)$ and $j=0,1,\ldots,p-2$, Lemma \ref{lem:frac:Leib} implies
\begin{align}\label{eq:w1-wp}
	\| w_1 w_2 \cdots w_{p-j} \|_{\dot{B}^{\mu}_{\alpha_j,2}}
	&\lesssim
	\| w_1 \cdots w_{p-j-1} \|_{\dot{B}^{\mu}_{\alpha_{j+1},2}} \| w_{p-j} \|_{L^q} \\
	&\quad
	+ \| w_{p-j} \|_{\dot{B}^{\mu}_{\beta_{j},2}} \| w_1 \cdots w_{p-j-1} \|_{L^{\frac{q}{p-j-1}}},
\end{align}
where $\mu>0, q\ge p-1$ and $\alpha_0 \ge 1$ are appropriately chosen later, and
$\alpha_j$ and $\beta_j $ are successively defined so that
the H\"{o}lder conditions are statisfied:
\begin{align}
    \frac{1}{\alpha_j}
    = \frac{1}{\alpha_{j-1}} - \frac{1}{q}
    = \frac{1}{\alpha_0} - \frac{j}{q}
    \quad (j=0,1,\ldots, p-1)
\end{align}
and
\begin{align}
    \frac{1}{\beta_j} = \frac{1}{\alpha_j} - \frac{p-j-1}{q}
    = \frac{1}{\alpha_0} - \frac{p-1}{q}
    \quad (j=0,1,\ldots,p-2).
\end{align}
Since
$\beta_j$ does not depend on $j$,
let us denote them all by $\beta_0$.
Here, we remark that
$q > \alpha_0 (p-1)$ is required to ensure $\beta_0 \in [1,\infty)$.
Then, using the estimates \eqref{eq:w1-wp} repeatedly
and applying the H\"{o}lder inequality,
we have
\begin{align}
    \left\| w_1 w_2 \cdots w_p \right\|_{\dot{B}_{\alpha_0,2}^{\mu}}
    & \lesssim
    \left\| w_1 \cdots w_{p-1} \right\|_{\dot{B}_{\alpha_1,2}^{\mu}}
    \| w_p \|_{L^q}
    + \| w_p \|_{\dot{B}_{\beta_0,2}^{\mu}}
    \| w_1 \cdots w_{p-1} \|_{L^{\frac{q}{p-1}}} \\
    &\lesssim
    \left(\left\| w_1 \cdots w_{p-2} \right\|_{\dot{B}_{\alpha_2,2}^{\mu}}
    \| w_{p-1} \|_{L^q}
    + \| w_{p-1} \|_{\dot{B}_{\beta_0,2}^{\mu}}
    \| w_1 \cdots w_{p-2} \|_{L^{\frac{q}{p-2}}}\right)
    \| w_p \|_{L^q} \\
    &\quad
    +\| w_p \|_{\dot{B}_{\beta_0,2}^{\mu}}
    \prod_{\ell \neq p} \| w_{\ell} \|_{L^q} \\
    & \lesssim \cdots \\
    &\lesssim 
    \sum_{m=1}^p \left( \| w_m \|_{\dot{B}^{\mu}_{\beta_0,2}}
    \prod_{\ell \neq m} \| w_{\ell} \|_{L^q} \right) \\
    &\lesssim 
    \sum_{m=1}^p \left( \| w_m \|_{\dot{B}^{\mu}_{\beta_0,2}}
    \prod_{\ell \neq m} \| w_{\ell} \|_{\dot{B}^0_{q,2}} \right),
\end{align}
where for the last inequality we also require $q \ge 2$.
To further estimate the right-hand side by $X(T)$-norm by applying Lemma \ref{lem:int},
we need the conditions
\begin{align}
    \frac{n}{q} &= (1-\theta) \frac{n}{r} + \theta \left( \frac{n}{2} -s \right)
    \quad \text{with some} \ \theta \in [0,1], \\
    \frac{n}{\beta_0} - \mu
    &= (1-\theta') \frac{n}{r} + \theta' \left( \frac{n}{2} -s \right)
    \quad \text{with some} \ 
    \theta' \in \left[ \frac{\mu}{s},1 \right],
\end{align}
in other words,
\begin{align}
\label{eq:cond:q1}
	\frac{n}{2}-s &\le \frac{n}{q} \le \frac{n}{r},\\
\label{eq:cond:q2}
	\frac{n}{2}-s &\le \frac{n}{\beta_0} - \mu \le \left( 1-\frac{\mu}{s} \right) \frac{n}{r} + \frac{\mu}{s} \left( \frac{n}{2}-s \right).
\end{align}
In the following, we have to choose the parameters
$\mu, q, \alpha_0$ so that the above two conditions and
\begin{equation}\label{eq:cond:q3}
	q > \alpha_0 (p-1) \quad \text{and} \quad q \ge 2
\end{equation}
hold.

%%%%%
\noindent
\textbf{(Step 2)}
Let us estimate
$\langle t\rangle^{\frac{n}{2}\left(\frac{p}{r}-\frac{1}{\gamma}\right)} \| u^p \|_{\dot{B}^0_{\gamma,2}}$
for
$\gamma \in [\sigma_1, \sigma_2]$.
Take
$\mu > 0$
sufficiently small and
$\alpha_0 = \frac{n\gamma}{\mu \gamma + n}$,
$q = p \gamma$.
Then, the Sobolev embedding implies
$\| u^p \|_{\dot{B}^0_{\gamma,2}} \lesssim \| u^p \|_{\dot{B}^{\mu}_{\alpha_0,2}}$.
In this case $\beta_0$ is given by
$\beta_0 = \frac{p n \gamma}{p\mu \gamma + n}$.
Note that
$\alpha_0, \beta_0 \ge 1$
holds if $\mu$ is sufficiently small.
Then, by $p \ge 2$ and $\gamma > 1$, the condition $q \ge 2$ obviously holds.
Moreover, $\alpha_0 (p-1) = \frac{n\gamma}{\mu \gamma + n} (p-1) < p \gamma$
is true.
Thus, the condition \eqref{eq:cond:q3} is fulfilled.

Next, we check the condition \eqref{eq:cond:q1}.
From the definitions of $\sigma_1$ and $\sigma_2$ in Definition \ref{def:norm}, we have
\begin{align}
    r \le p \gamma
    \begin{dcases}
    < \infty&(2s \ge n),\\
    \le \frac{2n}{n-2s} &(2s < n)
    \end{dcases}
    \quad
    \text{for}
    \quad
    \gamma \in [\sigma_1, \sigma_2].
\end{align}
This immediately implies \eqref{eq:cond:q1}.

Finally, let us check the condition \eqref{eq:cond:q2}.
In this case it is equivalent to
\[
	\frac{n}{2}-s \le \frac{n}{p\gamma}  \le \left( 1-\frac{\mu}{s} \right) \frac{n}{r} + \frac{\mu}{s} \left( \frac{n}{2}-s \right).
\]
The left inequality has been already proved above.
By taking $\mu$ further small if needed, the right one is also true,
since $\frac{n}{p\sigma_1} < \frac{n}{r}$.
Therefore, we have \eqref{eq:cond:q2}.

Consequently, we can apply the argument in Step 1 to obtain
\begin{align}
    \langle t\rangle^{\frac{n}{2}\left(\frac{p}{r}-\frac{1}{\gamma}\right)} \| u^p \|_{\dot{B}^0_{\gamma,2}}
    &\lesssim
    \left(\langle t \rangle^{\frac{n}{2}\left(\frac{1}{r}-\frac{1}{\beta_0} \right) + \frac{\mu}{2}}
    \| u \|_{\dot{B}^{\mu}_{\beta_0, 2}}
    \right)
    \left(\langle t)^{\frac{n}{2}\left(\frac{1}{r}-\frac{1}{q}\right)}\|u\|_{\dot{B}^0_{q,2}} \right)^{p-1} \\
    &\lesssim
    \| u \|_{X(T)}^p.
\end{align}

%%%
\noindent
\textbf{(Step 3)}
Next, we consider
the estimate of
$\langle t\rangle^{\eta}
\left\| u^p \right\|_{\dot{B}^{s-1}_{2,2}}$
for $s > 1$.
Take
$\mu = s-1$
and
$\alpha_0 =2$.
In this case, since $p \ge 2$ is an integer,
the condition \eqref{eq:cond:q3} reduces to just
$q > 2(p-1)$.
Moreover, by a straightforward computation, the condition \eqref{eq:cond:q2} is rewritten as
\[
	\frac{n}{s} \left( \frac{1}{2}-\frac{1}{r} \right) \le \frac{n(p-1)}{q} \le 1.
\]
Therefore, it suffices to show that the intersection of the intervals for $\frac{n(p-1)}{q}$
obtained from \eqref{eq:cond:q1}--\eqref{eq:cond:q3}
is nonempty, that is,
\begin{equation}\label{eq:cond:q4}
	\left[ \frac{n}{s}\left( \frac{1}{2} - \frac{1}{r} \right), 1 \right]
	\cap \left[ \left(\frac{n}{2}-s\right)(p-1), \frac{n}{r}(p-1) \right]
	\cap \left(0, \frac{n}{2} \right) \neq \emptyset,
\end{equation}
which is equivalent to
\[
	\left[ \max \left\{ \frac{n}{s} \left( \frac{1}{2}- \frac{1}{r} \right), \left( \frac{n}{2}-s \right) (p-1) \right\},
		\min \left\{ 1, \frac{n}{r}(p-1) \right\} \right]
		\cap \left(0, \frac{n}{2} \right)
		\neq \emptyset.
\]
By the assumptions (i) and (ii) in Theorem \ref{thm:lwp} and $s>1$,
we can easily check the condition \eqref{eq:cond:q4} holds.
Therefore, we can appropriately choose the parameter $q$ so that
\eqref{eq:cond:q1}--\eqref{eq:cond:q3} are satisfied.

Hence, we can apply Lemma \ref{lem:int} and conclude
\begin{align}
    \langle t \rangle^{\eta}
    \| u^p \|_{\dot{B}^{s-1}_{2,2}}
    &\lesssim
     \langle t \rangle^{\frac{n}{2}\left(\frac{1}{r}-\frac{1}{\beta_0} \right) + \frac{s-1}{2}}
     \| u \|_{\dot{B}^{s-1}_{\beta_0,2}}
     \left(
     \langle t \rangle^{\frac{n}{2}\left( \frac{1}{r}-\frac{1}{q}\right)}
     \| u \|_{\dot{B}_{q,2}^0}
     \right)^{p-1}
     \\
     &\lesssim
     \| u \|_{X(T)}^p.
\end{align}

%%%
\noindent
\textbf{(Step 4)}
We treat the term
$\langle t\rangle^{\eta} \left\| \chi_{>1}(\nabla) u^p \right\|_{\dot{B}^{s-1}_{2,2}}$
for
$0 < s \le 1$.
In this case, we do not use the argument of Step 1 and just apply the Sobolev embedding.
First, we claim that there exists $\mu \in (1,2]$ such that the following relations hold:
\begin{align}
    &\frac{n}{2} \le \frac{n}{\mu} \le \frac{n}{2} + 1-s,\\
    &\mu p \ge 2,\\
    &\frac{n}{\mu p} = (1-\theta) \frac{n}{r} + \theta \left( \frac{n}{2} -s \right)
    \quad \text{with some} \ 
    \theta \in [0,1].
\end{align}
Indeed, the conditions above can be rewritten as
\begin{align}
    \frac{n}{2p} \le \frac{n}{\mu p} & < \frac{n}{p},\\
    \frac{n}{2p} \le \frac{n}{\mu p} &\le \frac{n}{2p} + \frac{1-s}{p},\\
    \frac{n}{\mu p} &\le \frac{n}{2}, \\
    \frac{n}{2} -s \le \frac{n}{\mu p} &\le \frac{n}{r}.
\end{align}
The existence of such $\mu$ is equivalent to
\begin{align}
	\left[
		\max\left\{ \frac{n}{2p}, \frac{n}{2} -s \right\},
		\min \left\{ \frac{n}{2p} + \frac{1-s}{p}, \frac{n}{r} \right\}
	\right]
	\cap
	\left( 0, \frac{n}{p} \right)
	\neq \emptyset.
\end{align}
By a simple calculation, we see that this is equivalent to
\[
    \frac{r}{2} \le p \begin{dcases}
        < \infty &( 2s \ge n),\\
        \le 1 + \frac{2}{n-2s} &(2s < n)
    \end{dcases}
    \quad \text{and} \quad 
    p < \frac{2n}{n-2s} \ (2s < n),
\]
which hold from the assumptions of Theorem \ref{thm:lwp},
since
$\min \{ \frac{r}{2}, 1 + \frac{r}{2s} - \frac{1}{s} \} = \frac{r}{2}$
when $s \le 1$,
and
$p < 1+\frac{n}{n-2s}$ if $2s < n$
(see also Remark \ref{rem:lwp} (ii)).
Thus, the claim is proved.

Let $\mu$ be as above.
Since $u \in X(T)$,
by Lemma \ref{lem:int},
we have
$u \in L^{\mu p}(\mathbb{R}^n)$ and hence
$u^p \in L^{\mu}(\mathbb{R}^n)$.
Then, by the embeddings
$L^{\mu}(\mathbb{R}^n) \subset B^0_{\mu, 2}(\mathbb{R}^n)
\subset
B^{s-1}_{2,2}(\mathbb{R}^n)$,
it is obvious that
$\chi_{>1}(\nabla) u^p \in \mathcal{S}_h'(\mathbb{R}^n)$
and
we have the estimates
\begin{equation}\label{eq:up:s<1}
    \| \chi_{>1}(\nabla) u^p \|_{\dot{B}^{s-1}_{2,2}}
    \lesssim
    \| u^p \|_{B^{s-1}_{2,2}}
    \lesssim
    \| u^p \|_{L^{\mu}}
    \lesssim
    \| u \|_{L^{\mu p}}^p
    \lesssim
    \| u \|_{\dot{B}^0_{\mu p,2}}^p.
\end{equation}
Thus, we conclude
\begin{align}
    \langle t \rangle^{\eta} \| \chi_{>1}(\nabla) u^p \|_{\dot{B}^{s-1}_{2,2}}
    &\lesssim
    \langle t \rangle^{\frac{s-1}{2} + \frac{n}{2}\left( \frac{p}{r}-\frac{1}{2} \right)} \| u \|_{\dot{B}^0_{\mu p,2}}^p \\
    &\lesssim
    \langle t \rangle^{\frac{n}{2}\left( \frac{p}{r}-\frac{1}{\mu}\right)}
    \| u \|_{\dot{B}^0_{\mu p,2}}^p \\
    &\lesssim
    \left( \langle t \rangle^{\frac{n}{2}\left(\frac{1}{r}-\frac{1}{\mu p}\right)} \| u \|_{\dot{B}^0_{\mu p,2}} \right)^p \\
    &\lesssim
    \| u \|_{X(T)}^p.
\end{align}

%%%%%%%%%%%%%
%%%
\noindent
\textbf{(Step 5)}
Finally, we prove the estimate of
$\| u^p - v^p \|_{Y(T)}$.
For the term
$\langle t\rangle^{\frac{n}{2}\left(\frac{p}{r}-\frac{1}{\gamma}\right)} \| u^p-v^p \|_{\dot{B}^0_{\gamma,2}}$
with
$\gamma \in [\sigma_1, \sigma_2]$,
we express the difference $u^p-v^p$ as
\begin{align}
	u^p-v^p
	&= u^{p-1}(u-v) + u^{p-2} v (u-v) + \cdots + v^{p-1} (u-v)
\end{align}
and apply the argument of Steps 1--2 to each term.
Then, we have
\[
	\langle t\rangle^{\frac{n}{2}\left(\frac{p}{r}-\frac{1}{\gamma}\right)} \| u^p-v^p \|_{\dot{B}^0_{\gamma,2}}
	\lesssim
	\| u- v \|_{X(T)} \left( \|u \|_{X(T)} + \| v \|_{X(T)} \right)^{p-1}.
\]
The estimate of the term
$\langle t\rangle^{\eta}
\left\| u^p-v^p \right\|_{\dot{B}^{s-1}_{2,2}}$
for $s > 1$
can be obtained in the same way.
The estimate of the term
$\langle t\rangle^{\eta} \left\| \chi_{>1}(\nabla) u^p \right\|_{\dot{B}^{s-1}_{2,2}}$
for
$0 < s \le 1$
can be proved by combining the estimate \eqref{eq:up:s<1} with
$\| u^p-v^p \|_{L^{\mu}} \lesssim \| u-v \|_{L^{\mu p}} \left( \| u \|_{L^{\mu p}} + \| v \|_{L^{\mu p}} \right)^{p-1}$.
This completes the proof.
\end{proof}
%%%%%%%%%%%%%%%%%%%%%%%%%%%%%%%%%%

To prove Theorem \ref{thm:lwp},
we apply the contraction mapping principle.
Let $T \in (0,\infty]$, $M>0$
and define
\begin{align}\label{eq:xtm}
    X(T, M)
    = \{ \phi \in X(T) ; \| \phi \|_{X(T)} \le M \}
\end{align}
with the metric
$d(\phi,\psi) := \| \phi - \psi \|_{X(T)}$.
We first state the following.
%%%%%%%%%%%%%%%%%%%%%%%%%%%%%%%%%%%
\begin{lemma}\label{lem:XTM}
$X(T,M)$ equipped with the metric $d(\cdot, \cdot)$ above
is a complete space.
\end{lemma}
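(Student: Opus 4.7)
\medskip
\noindent\textbf{Proof proposal.}
The plan is to exhibit $X(T,M)$ as a closed ball in a Banach space $X(T)$. Since any closed subset of a complete metric space is itself complete under the induced metric, this reduces the task to (i) showing $X(T)$ is a Banach space and (ii) verifying that the ball is closed.

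For (i), I would first check that the pointwise target $Z := \dot{\mathcal{B}}^s_{2,2}(\mathbb{R}^n) \cap \dot{\mathcal{B}}^0_{r,2}(\mathbb{R}^n)$ is itself a Banach space under $\|\cdot\|_{\dot{B}^s_{2,2}} + \|\cdot\|_{\dot{B}^0_{r,2}}$. Because $r \in (2,\infty)$, the triple $(s_0,p_0,q_0)=(0,r,2)$ satisfies $s_0 < n/p_0$, so property (4) of the introduction gives completeness of $Z$ for every $s \in \mathbb{R}$. The functional $\|\cdot\|_{X(T)}$ is then the essential supremum over $(0,T)$ of a strictly positive, continuous $t$-weighted $Z$-norm, and is therefore a norm on $X(T)$.

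Given a Cauchy sequence $\{\phi_k\}\subset X(T)$, I would extract a fast Cauchy subsequence with $\|\phi_{k+1}-\phi_k\|_{X(T)}\le 2^{-k}$ and unify the associated null sets. Outside the common null set, the temporal weight $\langle t\rangle^{s/2-n(1/2-1/r)/2}$ is bounded below at each fixed $t$, so the telescoping series $\sum_k(\phi_{k+1}(t)-\phi_k(t))$ converges absolutely in $Z$; adding $\phi_1(t)$ defines an a.e.\ pointwise limit $\phi(t)\in Z$. Strong measurability of $\phi$ as a map into $\dot{\mathcal{B}}^0_{r,2}(\mathbb{R}^n)$ passes to the limit from the $\phi_k$ (invoking property (10) if needed), and a Fatou-type argument on the essential supremum yields both $\phi\in X(T)$ and $\phi_k\to\phi$ in $X(T)$.

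For (ii), the reverse triangle inequality makes $\phi\mapsto\|\phi\|_{X(T)}$ continuous on $X(T)$, so $X(T,M)$ is the preimage of $[0,M]$ and hence closed; combined with (i), this gives completeness of $(X(T,M),d)$. The only real technical point, rather than a genuine obstacle, is the careful passage from pointwise a.e.\ convergence in $Z$ to convergence in the time-weighted $L^\infty$ norm, which proceeds by the common-null-set reduction together with the lower semicontinuity of the essential supremum rather than any new ingredient.
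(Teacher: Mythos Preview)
Your proposal is correct and follows essentially the same route as the paper: both reduce to showing $X(T)$ is complete (the ball $X(T,M)$ being closed), use the completeness of the target space $\dot{\mathcal{B}}^s_{2,2}\cap\dot{\mathcal{B}}^0_{r,2}$ from property~(4), and pass from pointwise-a.e.\ limits through a common-null-set and $\limsup$ argument to recover the weighted $L^\infty$ bound. The only cosmetic difference is that the paper cites completeness of $L^\infty(0,T;\dot{\mathcal{B}}^s_{2,2}\cap\dot{\mathcal{B}}^0_{r,2})$ as a known fact to produce the limit, whereas you unfold that step via a fast Cauchy subsequence and telescoping; the remaining ``$\phi\in X(T)$'' step is handled identically in both.
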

%%%%%%%%%%%%%%%%%%%%%%%%%%%%%%%%%%
We postpone the proof of this lemma to
Appendix \ref{app:E}
and continue the proof of Theorem \ref{thm:lwp}.

\begin{proof}[Proof of Theorem \ref{thm:lwp}]
Let us prove the existence of the solution
to \eqref{eq:ndw}.
Let $T \in (0,1)$.
For $u \in X(T)$,
we define the mapping
\begin{align}\label{eq:psi}
    \Psi(u)(t):=\left(\partial_{t}+1\right) \mathcal{D}(t)  u_{0}+\mathcal{D}(t)  u_{1}
    +\int_{0}^{t} \mathcal{D}(t-\tau) u(\tau)^p \,d \tau.
\end{align}
First, let
\[
	\| (u_0, u_1) \|_Z := \| u_0 \|_{
    \dot{B}^{s}_{2,2} \cap \dot{B}_{r,2}^0
    }
    + \| \chi_{>1}(\nabla) u_0 \|_{\dot{B}_{r,2}^{\beta}}
    +
    \| u_1 \|_{\dot{B}^{s-1}_{2,2}\cap \dot{B}^{0}_{r,2}}
    + \| \chi_{>1}(\nabla) u_1 \|_{\dot{B}_{r,2}^{\beta-1}}.
\]
By Proposition \ref{prop:lplq}, we have
\begin{align}\label{eq:lin:est}
\quad
    \| \left(\partial_{t}+1\right) \mathcal{D}(t)  u_{0}+\mathcal{D}(t)  u_{1} \|_{X(T)}
    \le C_0 \| (u_0, u_1) \|_Z
%    \left(
%    \| u_0 \|_{\dot{B}^{s}_{2,2}\cap \dot{B}^{\beta}_{r,2}
%    \cap \dot{B}^{0}_{r,2}
%    }
%    +
%    \| u_1 \|_{\dot{B}^{s-1}_{2,2}\cap \dot{B}^0_{r,2}}
%    \right)
\end{align}
with some constant $C_0 > 0$
independent of $T$.
Let
\begin{align}
    M := 2 C_0 \| (u_0, u_1) \|_Z.
%	\left(
%    \| u_0 \|_{\dot{B}^{s}_{2,2} \cap \dot{B}^{\beta}_{r,2}
%    \cap \dot{B}^{0}_{r,2}
%    }
%    +
%    \| u_1 \|_{\dot{B}^{s-1}_{2,2}\cap \dot{B}^0_{r,2}}
%    \right).
\end{align}
Then, Lemmas \ref{lem:Duha} and \ref{lem:nonlin} yield
\begin{align}\label{eq:apriori}
    \|\Psi(u)\|_{X(T)} \leq \frac{M}{2}+C_{1} T M^{p}
\end{align}
and
\begin{align}\label{eq:apriori:diff}
    \|\Psi(u)-\Psi(v)\|_{X(T)} \leq C_{2} T(2 M)^{p-1}\|u-v\|_{X(T)}
\end{align}
for any $u,v \in X(T,M)$
with some constants $C_1, C_2 > 0$.
Therefore, taking
$T \in (0,1)$
sufficiently small, we see that
$\Psi$
is a contraction mapping on $X(T,M)$.
Thus, the fixed point theorem implies
there exists a unique
$u \in X(T,M)$
such that $\Psi (u) = u$,
that is,
$u$
satisfies the integral equation \eqref{eq:def:sol}.

Next, we show the continuity in $t$ of $u$.
For the linear part, by Proposition \ref{prop:lplq}, we see that
\begin{align}\label{eq:lin:sp}
    \left(\partial_{t}+1\right) \mathcal{D}(t)  u_{0}+\mathcal{D}(t) u_{1}
    \in
    C([0,T]; \dot{\mathcal{B}}^s_{2,2}(\mathbb{R}^n) \cap \dot{\mathcal{B}}^0_{r,2}(\mathbb{R}^n)
    ).
\end{align}
Indeed, first, the boundedness of the
$\dot{\mathcal{B}}^s_{2,2}(\mathbb{R}^n) \cap \dot{\mathcal{B}}^0_{r,2}(\mathbb{R}^n)$-norm
follows from the estimate \eqref{eq:lin:est}.
Next, we show the
continuity in time in \eqref{eq:lin:sp}.
For any fixed
$t_0 \ge 0$,
from the definition of
$\mathcal{D}(t)$
(see \eqref{eq:Dt} and \eqref{eq:L}),
it is easy to see that
$\lim_{t\to t_0}
\| \Delta_j (\mathcal{D}(t) u_1 - \mathcal{D}(t_0) u_1) \|_{L^r} = 0$
holds for each
$j \in \mathbb{Z}$.
This and Proposition \ref{prop:lplq}
enable us to apply the Lebesgue
dominated convergence theorem to obtain
$\lim_{t\to t_0}
\| \mathcal{D}(t) u_1 - \mathcal{D}(t_0) u_1 \|_{\dot{B}^{0}_{r,2}} = 0$.
The other norms can be treated in the same way.
A similar argument with the integral equation
\eqref{eq:def:sol} shows the continuity of
the Duhamel term in time, that is, we have
$u \in C([0,T); \dot{\mathcal{B}}^s_{2,2}(\mathbb{R}^n) \cap \dot{\mathcal{B}}^0_{r,2}(\mathbb{R}^n))$.

Finally, we prove the uniqueness of
the solution in the space
$X(T)$.
Let $u$ and $v$ be solutions to \eqref{eq:ndw}
in
$X(T)$.
Set
$\tilde{M} := \| u\|_{X(T)} + \| v \|_{X(T)}$.
Then, by Lemmas \ref{lem:Duha} and \ref{lem:nonlin},
for any $T_1 \in (0,T)$,
we have
\begin{align}
    \| u - v \|_{X(T_1)}
    &=
    \left\|\int_{0}^{t} \mathcal{D}(t-\tau)(u^p-v^p) \,d\tau \right\|_{X(T_1)} \\
    &\le
    C_3 T_1 \| u - v \|_{X(T_1)}
    \left( \|u\|_{X(T_1)}+\|v\|_{X(T_1)} \right)^{p-1}\\
    &\le
    C_3 T_1 \tilde{M}^{p-1} \| u - v \|_{X(T_1)}
\end{align} 
with some constant $C_3$
independent of $T_1 \in (0,T)$.
Hence, taking $T_1$
sufficiently small, we have
$\| u -v \|_{X(T_1)} = 0$,
that is,
$u \equiv v$ on $(0,T_1)$.
Repeating this argument, we conclude
$u \equiv v$ in $(0,T)$.
This completes the proof of Theorem \ref{thm:lwp}.
\end{proof}
%%%%%%%%%%%%%%%%%%%%%%%%%%%%%%%%%%%%%%%%%

Finally, we give a proof of Theorem \ref{thm:gwp}.
%%%%%%%%%%%%%%%%%%%%%%%%%%%%%%%%%%%%%%%%
\begin{proof}[Proof of Theorem \ref{thm:gwp}]
We remark that, due to the derivative loss,
we cannot apply the usual extension argument.
To avoid it, following the argument by
\cite{HaKaNa04DIE},
we directly construct the global solution 
by the contraction mapping principle
on the interval $[0,\infty)$.

The argument is similar to that of
Theorem \ref{thm:lwp}.
Let
$p \ge 1+ \frac{2r}{n}$,
and let
$\varepsilon_0 > 0$
be a constant determined later.
Define
$M:= 2C_0 \varepsilon_0$,
where $C_0$ is the constant in
the linear estimate \eqref{eq:lin:est}.
We consider
$X(\infty,M)$ defined by \eqref{eq:xtm}.
We again consider the mapping
$\Psi$ defined by \eqref{eq:psi}.
Then, in this case,
Lemmas \ref{lem:Duha} and \ref{lem:nonlin} and the assumption \eqref{eq:varepsilon0} imply
\begin{align}
    \|\Psi(u)\|_{X(\infty)} \leq \frac{M}{2}+C_{1} M^{p}
\end{align}
and
\begin{align}
    \|\Psi(u)-\Psi(v)\|_{X(\infty)} \leq C_{2} (2 M)^{p-1}\|u-v\|_{X(\infty)}
\end{align}
instead of \eqref{eq:apriori}
and \eqref{eq:apriori:diff}, respectively.
Then, taking
$\varepsilon_0$
sufficiently small
depending on $C_0, C_1, C_2, p$,
we see that
$\Psi$
is a contraction mapping on
$X(\infty,M)$.
Hence, we find the solution
$u \in X(\infty,M)$,
and by the integral equation \eqref{eq:def:sol},
we have
$u \in C([0,\infty); \dot{\mathcal{B}}^s_{2,2}(\mathbb{R}^n)
\cap \dot{\mathcal{B}}^0_{r,2}(\mathbb{R}^n))$.
The uniqueness of solution in
$X(\infty)$
can be proved in the same manner as that of 
Theorem \ref{thm:lwp}.
\end{proof}
%%%%%%%%%%%%%%%%%%%%%%%%%%%%%%%%%%%%%%%%

\appendix
%%%%%%%%%%%%%%%%%%%%%%%%%%%%%%%%%%%%%%%%%%%%%%%%
%%%%%%%%%%%%%%%%%%%%%%%%%%%%%%%%%%%%%%%%%%%%%%%%
%%%%%%%%%%%%%%%%%%%%%%%%%%%%%%%%%%%%%%%%%%%%%%%%
\section{Proof of Proposition \ref{prop:lplq}}
We first note that,
if $g \in \mathcal{S}_h'(\mathbb{R}^n)$, then
$\mathcal{D}(t) g \in \mathcal{S}_h'(\mathbb{R}^n)$ for all $t > 0$.
Indeed, for any fixed $t > 0$ and for sufficiently small $a > 0$, we write
\begin{align}
    \chi_{\le a}(\nabla) \mathcal{D}(t) g
    = \mathcal{D}(t) \chi_{\le 1}(\nabla) \chi_{\le a}(\nabla) g
    = e^{-t/2} \mathcal{F}^{-1} \left[ L(t,\cdot) \chi_{\le 1}(\cdot) \right] \ast \chi_{\le a}(\nabla) g.
\end{align}
Since $L(t,\cdot) \chi_{\le 1}(\cdot) \in C_0^{\infty} (\mathbb{R}^n)$,
we apply the Young inequality to obtain
\begin{align}
    \| \chi_{\le a}(\nabla) \mathcal{D}(t) g \|_{L^{\infty}}
    \le
   e^{-t/2}  \| \mathcal{F}^{-1} \left[ L(t,\cdot) \chi_{\le 1}(\cdot) \right] \|_{L^1}
   \| \chi_{\le a}(\nabla) g \|_{L^{\infty}}
   \rightarrow 0 \quad (a \to 0).
\end{align}
This implies $\mathcal{D}(t) g \in \mathcal{S}_h'(\mathbb{R}^n)$.

To prove Proposition \ref{prop:lplq}, it suffices to show
\begin{align}\label{eq:App:A1}
    \qquad
    2^{k s_1} \| \Delta_k \mathcal{D}(t) g \|_{L^p}
    &\lesssim
    \begin{dcases}
     \langle t\rangle^{-\frac{n}{2}\left(\frac{1}{q}-\frac{1}{p}\right)-\frac{s_{1}-s_{2}}{2}}
     2^{ks_2}
    \left\| \Delta_k g \right\|_{L^q}
    &(k \le 0),\\
    e^{-\frac{t}{2}} \langle t\rangle^{\delta_{p}}
    2^{k(s_1+\beta-1)}
    \left\| \Delta_k g\right\|_{L^p}
    &(k \ge 0).
    \end{dcases}
\end{align}
Replacing
$\Delta_k$
by
$\widetilde{\Delta}_k \Delta_k$,
and using the completely same argument
as in
\cite[Propositions 2.4 and 2.5]{IkInOkWa19},
we can prove the following two lemmas:
\begin{lemma}
Let
$k \le 0$,
$1 \le q \le p \le \infty$,
and
$s_1 \ge s_2$.
Then,
\begin{align}
    2^{ks_1} \left\| \widetilde{\Delta}_k \mathcal{D}(t) \Delta_k g \right\|_{L^{p}}
    \lesssim
    \langle t\rangle^{-\frac{n}{2}\left(\frac{1}{q}-\frac{1}{p}\right)-\frac{s_{1}-s_{2}}{2}}
    2^{ks_2}
    \left\| \Delta_k g\right\|_{L^{q}},
\end{align}
where the implicit constant
is independent of
$k$.
\end{lemma}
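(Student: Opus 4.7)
The plan is to obtain the estimate by Young's inequality combined with a rescaled kernel bound. Since $\widetilde{\Delta}_k$, $\mathcal{D}(t)$ and $\Delta_k$ are all Fourier multipliers, I would write $\widetilde{\Delta}_k\mathcal{D}(t)\Delta_k g = K_k(t)\ast \Delta_k g$ where $K_k(t)$ is the inverse Fourier transform of the product of the three symbols (with joint support in $|\xi|\sim 2^k$). Young's inequality with $1/a = 1+1/p-1/q$ then gives
\[
	\|\widetilde{\Delta}_k \mathcal{D}(t)\Delta_k g\|_{L^p} \le \|K_k(t)\|_{L^a}\, \|\Delta_k g\|_{L^q},
\]
so everything reduces to establishing
\[
    \|K_k(t)\|_{L^a} \lesssim 2^{kn(1-1/a)} e^{-c\,2^{2k}t},\qquad k\le 0,\ t>0.
\]

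The heart of the argument is the analysis of the rescaled symbol. After a change of variables $\eta = 2^{-k}\xi$ that normalizes the shell to $|\eta|\sim 1$ and pulls out the Jacobian $2^{kn(1-1/a)}$, the symbol becomes $e^{-t/2}L(t,2^k\eta)\widetilde{\chi}(\eta)$. For $k$ sufficiently negative so that $|2^k\eta|<1/2$ on the rescaled support, one uses the identity
\[
	e^{-t/2}L(t,\xi) = \frac{e^{-t(1/2-\omega)} - e^{-t(1/2+\omega)}}{2\omega},\qquad \omega:=\sqrt{1/4-|\xi|^2},
\]
together with $1/2-\omega\sim|\xi|^2$ for small $|\xi|$, which shows that $e^{-t/2}L(t,2^k\eta)\sim e^{-c\,2^{2k}|\eta|^2 t}$ on $|\eta|\sim 1$ modulo an exponentially small remainder. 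This is a rescaled heat-kernel symbol and its inverse Fourier transform satisfies the required $L^a$-bound with exponential factor $e^{-c\,2^{2k}t}$ by standard Schwartz-class estimates on a fixed compact set. For the finitely many values of $k$ near $0$, the support straddles $|\xi|=1/2$, but on this bounded frequency range $e^{-t/2}|L(t,\xi)|\lesssim (1+t)e^{-t/2}$ directly yields the exponential decay.

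Given the kernel bound, Young produces $\|\widetilde{\Delta}_k\mathcal{D}(t)\Delta_k g\|_{L^p}\lesssim 2^{kn(1/q-1/p)}e^{-c\,2^{2k}t}\|\Delta_k g\|_{L^q}$. Setting $\alpha := \tfrac{n}{2}(\tfrac{1}{q}-\tfrac{1}{p}) + \tfrac{s_1-s_2}{2} \ge 0$, I would write $2^{k(s_1-s_2)}\cdot 2^{kn(1/q-1/p)} = (2^{2k})^{\alpha}$ and apply the elementary inequality $x^{\alpha}e^{-cx}\lesssim 1$ at $x = 2^{2k}t$ to convert the exponential decay into $t^{-\alpha}$, handling the range $t\ge 1$. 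For $t\in(0,1]$, the factor $2^{k(s_1-s_2)}\cdot 2^{kn(1/q-1/p)}\le 1$ (since $k\le 0$ and both exponents are nonnegative) gives the bound by the constant $\langle t\rangle^{-\alpha}\sim 1$. The main obstacle is the uniform-in-$(k,t)$ $L^a$-control of the rescaled kernel, especially near the transition $|\xi|=1/2$ where $1/\omega$ appears singular; this is circumvented by noting that $\sinh(tz)/z$ is entire in $z$, so the apparent singularity is removable and the symbol is smooth and compactly supported in $\eta$, ensuring that the $L^a$ Schwartz bounds can be made uniform in the parameters.
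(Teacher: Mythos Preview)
Your approach is correct and is essentially the argument the paper has in mind (it defers to \cite[Propositions 2.4 and 2.5]{IkInOkWa19}): rescale the multiplier to a unit shell, extract the heat-type factor $e^{-c\,2^{2k}t}$ from the symbol, bound the kernel in $L^a$ uniformly via Schwartz estimates, and finish with Young's inequality and $x^{\alpha}e^{-cx}\lesssim 1$.

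One small correction for the finitely many $k$ near $0$: the pointwise bound $e^{-t/2}|L(t,\xi)|\lesssim (1+t)e^{-t/2}$ fails on the full support of $\widetilde{\Delta}_k$. Near the lower edge of the shell (e.g.\ $|\xi|\approx 1/4$) one has $\omega=\sqrt{1/4-|\xi|^2}$ bounded away from $0$, so $e^{-t/2}\sinh(t\omega)/\omega\sim e^{-t(1/2-\omega)}$ decays only like $e^{-\delta t}$ with $\delta=1/2-\omega<1/2$. This does not affect your argument, since for these $k$ one has $2^{2k}\sim 1$ and any fixed $\delta>0$ suffices; just replace $(1+t)e^{-t/2}$ by $(1+t)e^{-\delta t}$ for some $\delta>0$ depending on the infimum of $|\xi|$ over the support.
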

\begin{lemma}
Let
$k \ge 0$,
$1<p<\infty$,
$\beta = (n-1) |\frac{1}{2}-\frac{1}{p}|$.
Then, there exists
$\delta_p > 0$
such that
\begin{align}
    \left\| \widetilde{\Delta}_k \mathcal{D}(t) \Delta_k g\right\|_{L^{p}}
    \lesssim
    e^{-\frac{t}{2}}
    \langle t\rangle^{\delta_{p}}
    2^{k(\beta-1)}
    \| \Delta_k g \|_{L^p},
\end{align}
where the implicit constant
is independent of
$k$.
\end{lemma}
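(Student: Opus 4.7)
The plan is to reduce the claim to the classical $L^p$-boundedness of wave-type oscillatory Fourier multipliers and then absorb any polynomial-in-time growth by the damping factor $e^{-t/2}$. Using $\widetilde{\Delta}_k\Delta_k=\Delta_k$, the operator has frequency support in the annulus $\{|\xi|\sim 2^k\}$, on which for $k\ge 1$ we have $\sqrt{|\xi|^2-1/4}\sim|\xi|\sim 2^k$ uniformly. I would first rewrite
\begin{align}
\widetilde{\Delta}_k\mathcal{D}(t)\Delta_k g = e^{-t/2}\,T_k(t)\,\Delta_k g,
\end{align}
where $T_k(t)$ is the Fourier multiplier with symbol $\widetilde{\chi}_{2^k}(\xi)\,L(t,\xi)$ and $L(t,\xi)=\sin(t\sqrt{|\xi|^2-1/4})/\sqrt{|\xi|^2-1/4}$ on the support (the second case in the definition of $L$).

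Next, for $k\ge 1$ I would decompose $\sin(\tau)=(e^{i\tau}-e^{-i\tau})/(2i)$ to express $T_k(t)$ as a sum of two frequency-localized half-wave operators $e^{\pm it\sqrt{|\nabla|^2-1/4}}\,a_k(\nabla)$, where the amplitude $a_k(\xi)=\widetilde{\chi}_{2^k}(\xi)/(\pm 2i\sqrt{|\xi|^2-1/4})$ is smooth, supported on $\{|\xi|\sim 2^k\}$, and satisfies symbol estimates of order $-1$. The classical Miyachi--Peral type $L^p$-estimate for frequency-localized half-wave operators (precisely the bound established in \cite[Proposition 2.5]{IkInOkWa19} for the same smoothed wave phase $\sqrt{|\xi|^2-1/4}$) then yields
\begin{align}
\|T_k(t)\Delta_k g\|_{L^p}\lesssim 2^{-k}\langle 2^k t\rangle^{(n-1)\left|\frac{1}{2}-\frac{1}{p}\right|}\|\Delta_k g\|_{L^p}.
\end{align}

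Combining with $e^{-t/2}$ and applying the elementary inequality $\langle 2^k t\rangle^\beta\le 2^{k\beta}\langle t\rangle^\beta$ (valid for $k\ge 0$, $t\ge 0$) produces the claimed bound with $\delta_p=\beta$. For the boundary case $k=0$, the operator $\widetilde{\Delta}_0\mathcal{D}(t)\Delta_0$ has smooth symbol supported on a fixed compact set (noting that $L(t,\xi)$ joins smoothly across $|\xi|=1/2$ despite the case split in its definition), and a direct estimate $|e^{-t/2}L(t,\xi)|\lesssim\langle t\rangle$ on this set combined with Young's inequality against the $L^1$-kernel of the compactly-supported multiplier yields the claim, since $2^{k(\beta-1)}=1$ at $k=0$.

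The principal obstacle is the invocation of the sharp $L^p$ multiplier estimate in the second step, since this is a deep classical fact about oscillatory Fourier integrals with smoothed wave phases; however, it is exactly the content of \cite[Proposition 2.5]{IkInOkWa19}, proved there by standard stationary phase analysis, and the argument transfers verbatim to the present setting after inserting the additional cutoff $\widetilde{\Delta}_k$ on the left. The remainder is routine bookkeeping of the $2^k$- and $t$-weights together with the trivial separation of the $k=0$ term.
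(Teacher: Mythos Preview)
Your approach is essentially the same as the paper's: the paper simply states that the lemma follows by the argument of \cite[Proposition~2.5]{IkInOkWa19} after inserting the extra cutoff $\widetilde{\Delta}_k$, and you have correctly identified and fleshed out exactly that reduction.

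One small slip to fix: in your treatment of $k=0$ you write $|e^{-t/2}L(t,\xi)|\lesssim\langle t\rangle$, which would lose the exponential decay. What you want is $|L(t,\xi)|\lesssim\langle t\rangle$ on the support of $\Delta_0$ (this holds since $|\sin x|\le|x|$ handles the degeneracy at $|\xi|=1/2$), so that the full multiplier satisfies $e^{-t/2}|L(t,\xi)|\lesssim e^{-t/2}\langle t\rangle$. This forces $\delta_p\ge 1$ at $k=0$, which is inconsistent with your earlier choice $\delta_p=\beta$ when $\beta<1$; simply take $\delta_p=\max\{1,\beta\}$ (the lemma only asserts existence of some $\delta_p>0$).
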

These estimates immediately
imply \eqref{eq:App:A1}.

%%%%%%%%%%%%%%%%%%%%%%%%%%%%%%%%%%%%%%%%

\section{Proof of the statements in Remark \ref{rem:frac:Leib}}\label{app:FraLei}
In this appendix, we show the statements in Remark \ref{rem:frac:Leib}.

\noindent
\textup{(i)}
First, under the assumptions
$\alpha > 0$, $1 \le r, \sigma, p_1, p_2, q_1, q_2 \le \infty$
satisfying
$\frac{1}{r} = \frac{1}{p_1}+\frac{1}{q_1} = \frac{1}{p_2} + \frac{1}{q_2}$
and
$f \in \dot{\mathcal{B}}^{\alpha}_{p_1,\sigma}(\mathbb{R}^n) \cap L^{q_2}(\mathbb{R}^n)$
and
$g \in \dot{\mathcal{B}}^{\alpha}_{p_2,\sigma}(\mathbb{R}^n) \cap L^{q_1}(\mathbb{R}^n)$,
we prove 
$fg \in \mathcal{S}'(\mathbb{R}^n)$.

Since $f \in L^{q_2}(\mathbb{R}^n)$ and 
$g \in L^{q_1}(\mathbb{R}^n)$,
the product $fg$ makes sense as a measurable function.
Decompose $f$ as
\[
    f = f_L + f_H :=
    \Delta_{\le 0} f + \Delta_{>0} f \quad \text{in} \ \mathcal{S}'(\mathbb{R}^n)
\]
and $g=g_L + g_H$ in the same way.
Then, we can immediately see that
\begin{align*}
    f_L \in L^{\infty}(\mathbb{R}^n) \cap L^{q_2}(\mathbb{R}^n),\quad
    f_H \in B^{\alpha}_{p_1,\sigma}(\mathbb{R}^n) \cap L^{q_2}(\mathbb{R}^n),\\
    g_L \in L^{\infty}(\mathbb{R}^n) \cap L^{q_1}(\mathbb{R}^n),\quad
    g_H \in B^{\alpha}_{p_2,\sigma}(\mathbb{R}^n) \cap L^{q_1}(\mathbb{R}^n).
\end{align*}
In fact, for $f_L$, we have
$\| f_L \|_{L^{\infty}} = \| \mathcal{F}^{-1} [ \chi_{\le 1} ] \ast f \|_{L^{\infty}}
\lesssim \| \mathcal{F}^{-1} [\chi_{\le 1}] \|_{L^{q_2'}} \| f \|_{L^{q_2}}$
and
$\| f_L \|_{L^{q_2}} \lesssim  \| \mathcal{F}^{-1} [\chi_{\le 1}] \|_{L^{1}} \| f \|_{L^{q_2}}$.
For $f_H$, we have
$f_H = f - f_L \in L^{q_2}(\mathbb{R}^n)$
and
$\| f_H \|_{\dot{B}^{\alpha}_{p_1,\sigma}} \sim \| f_H \|_{B^{\alpha}_{p_1, \sigma}}$.
Moreover, we remark that
$B^{\alpha}_{p_1,\sigma}(\mathbb{R}^n) \subset L^{p_1}(\mathbb{R}^n)$
\footnote{%
By
$\| f \|_{L^{p_1}} \le \| \Delta_{\le 0} f \|_{L^{p_1}} + \| \sum_{j\ge 1} \Delta_j f \|_{L^{p_1}}$
and
$\| \sum_{j\ge 1} \Delta_j f \|_{L^{p_1}}
\le \sum_{j\ge 1} \| \Delta_j f \|_{L^{p_1}}
\le \| \{ 2^{-j \alpha} \}_{j \ge 1} \|_{l^{\sigma'}} \| \{ 2^{j\alpha} \| \Delta_j f \|_{L^{p_1}} \}_{j \ge 1} \|_{l^{\sigma}}
\lesssim \| f \|_{B^{\alpha}_{p_1,\sigma}}$.
}%
%(see \cite[Proposition 2.8 and Theorem 3.2]{Sa})
implies
$f_H \in L^{p_1}(\mathbb{R}^n)$
and
$g_H \in L^{p_2}(\mathbb{R}^n)$.
Thus, we have the pointwise decomposition
\begin{equation}\label{eq:app:b1}
    f g = (f_L+f_H)(g_L+g_H)
    = f_L g_L + f_L g_H + f_H g.
\end{equation}
Next, we estimate the right-hand side term by term.
The H\"{o}lder inequality implies
\[
    \| f_L g_L \|_{L^{q_1}}
    \le \| f_L \|_{L^{\infty}} \| g_L \|_{L^{q_1}} < \infty.
\]
The other terms can be estimated as
\begin{align}
    %\| f_H g_L \|_{L^r} \le \| f_H \|_{L^{p_1}} \| g_L \|_{L^{q_1}} < \infty,\\
    \| f_L g_H \|_{L^r} \le \| f_L \|_{L^{q_2}} \| g_H \|_{L^{p_2}} < \infty,\\
    \| f_H g \|_{L^r} \le \| f_H \|_{L^{p_1}} \| g \|_{L^{q_1}} < \infty.
\end{align}
Therefore, each term of the right-hand side of \eqref{eq:app:b1} belongs to $\mathcal{S}'(\mathbb{R}^n)$,
and so is $fg$.

\noindent
\textup{(ii)}
Next, we show that
$f g \in \mathcal{S}'_h (\mathbb{R}^n)$
if we additionally suppose
$r < \infty$ and $(q_1, q_2) \neq (\infty,\infty)$.
Let us suppose $q_1 \neq \infty$.
Then, the above estimates imply
$f_L g_L \in L^{q_1}(\mathbb{R}^n)$
and
$f_L g_H, f_H g \in L^r(\mathbb{R}^n)$.
Since $q_1, r < \infty$,
these three terms belong to $\mathcal{S}_h'(\mathbb{R}^n)$,
and so is $fg$.

\noindent
\textup{(iii)}
Finally, we show that the paraproduct decomposition
\begin{equation}\label{eq:app:b2}
	fg = \dot{T}_f g + \dot{T}_g f + \dot{R}(f, g) \quad \text{in} \ \mathcal{S}'(\mathbb{R}^n)
\end{equation}
holds, under the assumptions of Lemma \ref{lem:frac:Leib} with $r, q_1, q_2 < \infty$,
where
\[
	\dot{T}_f g := \sum_{j \in \mathbb{Z}} \Delta_{\le j-2} f \Delta_{j} g,\quad
	\dot{R}(f,g) := \sum_{\substack{k, j \in \mathbb{Z} \\ |k-j| \le 1}} \Delta_k f \Delta_j g.
\] 
%We remark that if $q_1 = q_2 = \infty$, then \eqref{eq:app:b2} does not hold in general,
%since for $f \equiv g \equiv = 1$, the LHS is $1$ but RHS is $0$.
Set
\[
	V_j f := \sum_{k=-j}^j \Delta_k f
\]
for $j \in \mathbb{N}$.
Then, we have the decomposition
\begin{equation}\label{eq:app:b3}
	V_j f V_j g = \dot{T}_{V_j f} V_j g + \dot{T}_{V_j g} V_j f + \dot{R}(V_j f, V_j g),
\end{equation}
where each term consists of finite sum.
Then, to prove \eqref{eq:app:b2}, it suffices to show
\begin{align}\label{eq:app:b4}
	V_j f V_j g
		&\to fg \quad \text{in} \ \mathcal{S}'(\mathbb{R}^n), \\
\label{eq:app:b5}
	\dot{T}_{V_j f} V_j g
		&\to \dot{T}_f g,\quad
	\dot{T}_{V_j g} V_j f \to \dot{T}_g f, \quad
		\dot{R}(V_j f, V_j g) \to \dot{R}(f, g) \quad \text{in} \ \mathcal{S}'(\mathbb{R}^n)
\end{align}
as $j \to \infty$.

To prove \eqref{eq:app:b4}, we decompose $V_j f V_j g$ into
\[
	V_j f V_j g = V_j f_L V_j g_L + V_j f_L V_j g_H + V_j f_H V_j g_L + V_j f_H V_j g_H.
\]
First, we remark that
$\| V_j h \|_{L^p} \lesssim \| h \|_{L^p}$
holds for any
$j\in \mathbb{N}$, 
$1 \le p \le \infty$ and $h \in L^p(\mathbb{R}^n)$,
where the implicit constant is independent of $j$.
In fact, we have
$V_j h = \chi_{\le 2^j}(\nabla) h - \chi_{\le 2^{-j-1}}(\nabla ) h$
and
$\| \chi_{\le a}(\nabla) h \|_{L^p} \lesssim \| \mathcal{F}^{-1} [ \chi ] \|_{L^1} \| h \|_{L^p}$.
Moreover, we compute
\[
	\| V_j f_L - f_L \|_{L^{\infty}}
	= \left\| \sum_{k=-j}^0 \Delta_k f - \sum_{k \le 0} \Delta_k f  \right\|_{L^{\infty}}
	= \left\| \sum_{k\le -j-1} \Delta_k f \right\|_{L^{\infty}}
	\to 0
\]
as $j \to \infty$, since $f \in \mathcal{S}'_h(\mathbb{R}^n)$.
We also have
\begin{align*}
	\| V_j f_H - f_H \|_{L^{p_1}}
	&\le \left\| \sum_{k \ge j+1} \Delta_k f \right\|_{L^{p_1}} \\
	&\lesssim
	\| \{ 2^{-\alpha k} \}_{k \ge j+1} \|_{l^{\sigma'}} \| \{ 2^{\alpha k} \| \Delta_k f \|_{L^{p_1}} \}_{k \ge j+1} \|_{l^{\sigma}} \\
	&\lesssim
	\| \{ 2^{-\alpha k} \}_{k \ge j+1} \|_{l^{\sigma'}}
	\| f \|_{\dot{B}^{\alpha}_{p_1,\sigma}}
	\to 0
\end{align*}
as $j \to \infty$.
In the same way, we can show
$\| V_j g_H - g_H \|_{L^{p_2}} \to 0$ as $j\to \infty$.
These estimates imply
\begin{align*}
	\| V_j f_L V_j g_L - f_L g_L \|_{L^{\infty}}
	&\le
	\| (V_j f_L - f_L) V_j g_L \|_{L^{\infty}} + \| f_L ( V_j g_L - g_L ) \|_{L^{\infty}} \\
	&\le \| V_j f_L - f_L \|_{L^{\infty}} \| V_jg_L \|_{L^{\infty}}
		+ \| f_L \|_{L^{\infty}} \| V_j g_L - g_L \|_{L^{\infty}}
	\to 0
\end{align*}
and
\begin{align*}
	\| V_j f_L V_j g_H - f_L g_H \|_{L^{p_2}}
	&\le 
	\| (V_j f_L - f_L) V_j g_H \|_{L^{p_2}} + \| f_L ( V_j g_H - g_H ) \|_{L^{p_2}} \\
	&\le
	\| V_j f_L - f_L \|_{L^{\infty}} \| V_j g_H \|_{L^{p_2}}
		+ \| f_L \|_{L^{\infty}} \| V_j g_H - g_H \|_{L^{p_2}}
	\to 0
\end{align*}
as $j \to \infty$.
In the same way, we can obtain
\[
	\| V_j f_H V_j g_L - f_H g_L \|_{L^{p_1}} \to 0
\]
as $j \to \infty$.
Finally, for $V_j f_H V_j g_H$, we have
\begin{align*}
	\| V_j f_H V_j g_H - f_H g_H \|_{L^r}
	&\le
	\| (V_j f_H - f_H ) V_j g_H \|_{L^r} + \| f_H ( V_j g_H - g_H) \|_{L^r} \\
	&\le
	\| V_j f_H - f_H \|_{L^{p_1}} \| V_j g_H \|_{L^{q_1}}
	+ \| f_H \|_{L^{q_2}} \| V_j g_H - g_H \|_{L^{p_2}} \to 0
\end{align*}
as $j \to \infty$.
This proves \eqref{eq:app:b4}.

To prove \eqref{eq:app:b5}, we write
\[
	\dot{T}_{V_j f} V_j g - \dot{T}_f g 
	= \dot{T}_{V_j f - f} V_j g_L
	+ \dot{T}_{V_j f - f} V_j g_H
	+ \dot{T}_f (V_j g_L - g_L)
	+ \dot{T}_f (V_j g_H - g_H).
\]
First, we estimate the term
$\dot{T}_{V_j f - f} V_j g_L$.
Since $\Delta_k V_j g_L = 0$ if $k \notin \{ -j-1, -j,\ldots, 1\}$
and $\Delta_{\le k-2} (V_j f-f) = \Delta_{\le k-2} \Delta_{\le -j-1}f $ if $k \in \{ -j-1, -j,\ldots, 1\}$,
we can write
\[
	\dot{T}_{V_j f - f} V_j g_L
	= \sum_{k \in \mathbb{Z}} \Delta_{\le k-2} (V_j f - f ) \Delta_k V_j g_L
	= \sum_{k=-j-1}^1 \Delta_{\le k-2}\Delta_{\le -j-1}f \Delta_k V_j g_L.
\]
Taking arbitrary $\mu_1 \in (q_1, \infty)$ and using the embedding
$L^{q_1}(\mathbb{R}^n) \subset \dot{\mathcal{B}}^0_{q_1,\infty}(\mathbb{R}^n) \subset
\dot{\mathcal{B}}^{-\beta}_{\mu_1, \infty}(\mathbb{R}^n)$
with $\beta = n (1/q_1 - 1/\mu_1) > 0$,
we estimate
\begin{align*}
	\| \dot{T}_{V_j f - f} V_j g_L \|_{L^{\mu_1}}
	&=
	\left\| \sum_{k=-j-1}^{1} (\Delta_{\le k-2} \Delta_{\le -j-1} f) \Delta_k V_j g_L \right\|_{L^{\mu_1}} \\
	&\le
	\sum_{k=-j-1}^{1} \| \Delta_{\le k-2} \Delta_{\le -j-1} f \|_{L^{\infty}}
		\| \Delta_k V_j g_L \|_{L^{\mu_1}} \\
	&\lesssim
	\| \Delta_{\le -j-1} f \|_{L^{\infty}}
		\| \{ 2^{k\beta} \}_{k\le 1} \|_{\ell^1}
		\| \{ 2^{-k\beta} \| \Delta_k g \|_{L^{\mu_1}} \}_{k\le 1} \|_{\ell^{\infty}} \\
	&\lesssim
	\| \Delta_{\le -j-1} f \|_{L^{\infty}} \| g \|_{\dot{B}^{-\beta}_{\mu_1, \infty}}
	\to 0
\end{align*}
as $j\to \infty$.
Here, we have used the fact $f \in \mathcal{S}_h'(\mathbb{R}^n)$.
In a smilar way, we have
\[
	\dot{T}_{V_j f - f} V_j g_H
	= \sum_{k=0}^{j+1} \Delta_{\le k-2}\Delta_{\le -j-1}f \Delta_k V_j g_L,
\]
and hence,
\begin{align*}
	\| \dot{T}_{V_j f - f} V_j g_H \|_{L^{p_2}}
	&=
	\left\| \sum_{k=0}^{j+1} (\Delta_{\le k-2} \Delta_{\le -j-1} f) \Delta_k V_j g_H \right\|_{L^{p_2}} \\
	&\le
	\sum_{k=0}^{j+1}
	\| \Delta_{\le k-2} \Delta_{\le -j-1} f \|_{L^{\infty}}
	\| \Delta_k V_j g_H \|_{L^{p_2}} \\
	&\lesssim
	\| \Delta_{\le -j-1} f \|_{L^{\infty}} 
	\| \{ 2^{-\alpha k} \}_{k \ge 0} \} \|_{\ell^{\sigma'}}
	\| \{ 2^{\alpha k} \| \Delta_k g \|_{L^{p_2}} \}_{k \ge 0} \|_{L^{p_2}} \\
	&\lesssim
	\| \Delta_{\le -j-1} f \|_{L^{\infty}} \| \{ 2^{-\alpha k} \}_{k \ge 0} \} \|_{\ell^{\sigma'}}
	\| g \|_{\dot{B}^{\alpha}_{p_2, \sigma}}
	\to 0 
\end{align*}
as $j\to \infty$.
Taking the same $\mu_1 \in (q_1, \infty)$ as above,
we have
\begin{align*}
	\| \dot{T}_f (V_j g_L - g_L) \|_{L^{\mu_1}}
	&= \left\| \sum_{k\in \mathbb{Z}} \Delta_{\le k-2} f \Delta_k \left( V_j g_L - g_L \right) \right\|_{L^{\mu_1}} \\
	&= \left\| \sum_{k\le -j} \Delta_{\le k-2} f \Delta_k \left( V_j g_L - g_L \right) \right\|_{L^{\mu_1}} \\
	%&\lesssim
	%	\left( \sup_{k \le -j} \| \Delta_{\le k-2} f \|_{L^{\infty}} \right)
	%	\sum_{k \le -j} \left\| \Delta_k \left( V_j g_L - g_L \right) \right\|_{L^{\mu}} \\
	&\lesssim
		\left( \sup_{k \le -j} \| \Delta_{\le k-2} f \|_{L^{\infty}} \right)
		\| \{ 2^{k\beta} \}_{k \le -j} \|_{l^1}
		\| \{ 2^{-k\beta} \| \Delta_k g \|_{L^{\mu_1}} \}_{k \le -j} \|_{l^{\infty}} \\
	&\lesssim
		\left( \sup_{k \le -j} \| \Delta_{\le k-2} f \|_{L^{\infty}} \right)
		\| \{ 2^{k\beta} \}_{k \le -j} \|_{l^1}
		\| g \|_{\dot{B}^{-\beta}_{\mu_1,\infty}}
	\to 0
\end{align*}
as $j \to \infty$.
We also have
\begin{align*}
	\| \dot{T}_f (V_j g_H - g_H) \|_{L^{r}}
	&= \left\| \sum_{k\in \mathbb{Z}} \Delta_{\le k-2} f \Delta_k \left( V_j g_H - g_H \right) \right\|_{L^{r}} \\
	&= \left\| \sum_{k \ge j} \Delta_{\le k-2} f \Delta_k \left( V_j g_H - g_H \right) \right\|_{L^{r}} \\
	&\lesssim \| f \|_{L^{q_2}} \sum_{k \ge j} \| \Delta_{k} g \|_{L^{p_2}} \\
	&\lesssim \| f \|_{L^{q_2}}
		\| \{ 2^{-k \alpha} \}_{k \ge j} \|_{l^{\sigma'}}
		\| \{ 2^{k \alpha} \| \Delta_{k} g \| _{L^{p_2}} \}_{k \ge j} \|_{l^{\sigma}} \\
	&\to 0
\end{align*}
as $j\to \infty$.
Thus, we have
$\dot{T}_{V_j f} V_j g \to \dot{T}_f g$ in $\mathcal{S}'(\mathbb{R}^n)$ as $j\to \infty$.
In the same way, we can prove
$\dot{T}_{V_j g} V_j f \to \dot{T}_g f$
as $j \to \infty$.

Next, we write
\begin{align*}
	&\dot{R}(V_j f, V_j g) - \dot{R}(f, g) \\
	&=
	\dot{R}(V_j f_L -f_L, V_j g) + \dot{R}(V_j f_H - f_H, V_j g)
	+ \dot{R} (f, V_j g_L - g_L ) + \dot{R} (f, V_j g_H - g_H ).
\end{align*}
Taking the same $\mu_1 \in (q_1, \infty)$ as above, we compute
\begin{align*}
	\| \dot{R} (f, V_j g_L - g_L ) \|_{L^{\mu_1}}
	&=
	\left\| \sum_{\substack{k,l\in \mathbb{Z} \\  |k-l| \le 1}}
		\Delta_k f \Delta_l (V_j g_L - g_L) \right\|_{L^{\mu_1}} \\
	&=
	\left\| \sum_{l \le -j} \sum_{k=l-1}^{l+1} \Delta_k f \Delta_l (V_j g_L - g_L) \right\|_{L^{\mu_1}} \\
	&\lesssim
	\left( \sup_{k \le -j+1} \| \Delta_k f \|_{L^{\infty}} \right)
	\| \{ 2^{l \beta} \}_{l \le -j} \|_{l^1}
	\| \{ 2^{-l\beta} \| \Delta_l (V_j g_L - g_L) \|_{L^{\mu_1}} \}_{l\le -j} \|_{l^{\infty}} \\
	&\lesssim
	\left( \sup_{k \le -j+1} \| \Delta_k f \|_{L^{\infty}} \right)
	\| \{ 2^{l \beta} \}_{l \le -j} \|_{l^1}
	\| g \|_{\dot{B}^{-\beta}_{\mu_1,\infty}}
	\to 0
\end{align*}
as $j \to \infty$.
Moreover, we have
\begin{align*}
	\| \dot{R}  (f, V_j g_H - g_H ) \|_{L^{r}} 
	&=
	\left\| \sum_{\substack{k,l\in\mathbb{Z} \\ |k-l| \le 1}}
		\Delta_k f \Delta_l ( V_j g_H - g_H ) \right\|_{L^r} \\
	&=
	\left\| \sum_{l \ge j} \sum_{k=l-1}^{l+1} \Delta_k f \Delta_l ( V_j g_H - g_H ) \right\|_{L^r} \\
	&\lesssim
	\| f \|_{L^{q_2}}
	\| \{ 2^{-l \alpha} \}_{l \ge j} \|_{l^{\sigma'}}
	\| g \|_{\dot{B}^{\alpha}{p_2,\sigma}}
	\to 0
\end{align*}
as $j\to \infty$.
In the same way, we can see that
\begin{align*}
	\|  \dot{R}(V_j f_L -f_L, V_j g) \|_{L^{\mu_2}}
	&\to 0, \quad \text{where $\mu_2 \in (q_2, \infty)$},
	\\
	\|  \dot{R}(V_j f_H -f_H, V_j g) \|_{L^{r}}
	&\to 0
\end{align*}
as $j\to \infty$.
Thus, we conclude
$\dot{R}(V_j f, V_j g) - \dot{R}(f, g) \to 0$ in $\mathcal{S}'(\mathbb{R}^n)$
and \eqref{eq:app:b5} is proved.
\bigskip

%%%%%%%%%%%%%%%%%%%%%%%%%%%%%%%%%%%
\section{Proofs of Lemmas \ref{lem:meas:psi} and \ref{lem:meas:N}} \label{app:C}
\begin{proof}[Proof of Lemma \ref{lem:meas:psi}]
Let $t \in (0,T)$ be fixed.
Since $\psi \in Y(T) \subset L^{\infty}(0,T; \dot{\mathcal{B}}^{0}_{\sigma_1,2}(\mathbb{R}^n))$,
the function
$\psi (\cdot) : [0,t] \ni \tau \mapsto \psi(\tau) \in \dot{\mathcal{B}}^{0}_{\sigma_1,2}(\mathbb{R}^n)$
is strongly measurable.
By the proof of Lemma \ref{lem:Duha}, we see that
\begin{align}
    \| \mathcal{D}(t-\tau) \psi (\tau) \|_{\dot{B}^0_{r,2}}
    \lesssim
    \| \psi(\tau) \|_{\dot{B}^{0}_{\sigma_1,2}}
    + \| \psi (\tau) \|_{\dot{B}^0_{\mu,2}}
\end{align}
with
$\mu = \max \left\{\sigma_{1}, \left(\frac{1}{r}+\frac{1-\beta}{n}\right)^{-1}\right\}$,
where the implicit constant may
depend on $t$.
Since $\mu \in [\sigma_1, \sigma_2]$,
we further obtain
\begin{align}
    \| \mathcal{D}(t-\tau) \psi (\tau) \|_{\dot{B}^0_{r,2}}
    \lesssim
    \| \psi(\tau) \|_{\dot{B}^{0}_{\sigma_1,2}}
    + \| \psi (\tau) \|_{\dot{B}^0_{\sigma_2,2}}.
\end{align}
Hence, 
$\mathcal{D}(t-\tau) : 
\dot{\mathcal{B}}^0_{\sigma_1,2}(\mathbb{R}^n) \cap \dot{\mathcal{B}}^0_{\sigma_2,2}(\mathbb{R}^n)
\to \dot{\mathcal{B}}^0_{r,2}(\mathbb{R}^n)$
is a continuous map.

Let $\{ \psi_j(\cdot) \}_{j=1}^{\infty}$
be a sequence of step functions satisfying
$\lim_{j\to \infty} \psi_j = \psi$
in
$\dot{\mathcal{B}}^0_{\sigma_1,2}(\mathbb{R}^n) \cap \dot{\mathcal{B}}_{\sigma_2,2}(\mathbb{R}^n)$.
Since
$\mathcal{D}(t-\tau) \psi_j(\tau)$
is continuous in
$\dot{\mathcal{B}}^0_{r,2}(\mathbb{R}^n)$
for $\tau \in [0,t]$
except for finite points,
it is also strongly measurable.
Moreover, the above estimate implies
\[
    \lim_{j\to \infty} \mathcal{D}(t-\tau) \psi_j(\tau)
    = \mathcal{D}(t-\tau) \psi(\tau)
    \quad \text{in} \quad
    \dot{\mathcal{B}}^0_{r,2}(\mathbb{R}^n).
\]
Since the limit of strongly measurable function is strongly measurable,
the function
$\mathcal{D}(t-\cdot) \psi(\cdot) : [0,t] \to \dot{\mathcal{B}}^0_{r,2}(\mathbb{R}^n)$
is strongly measurable.
\end{proof}

\begin{proof}[Proof of Lemma \ref{lem:meas:N}]
First, we show the continuity of the map
\[
	\mathcal{N} :
	\dot{\mathcal{B}}^s_{2,2}(\mathbb{R}^n)\cap \dot{\mathcal{B}}^0_{r,2}(\mathbb{R}^n)
	\ni u \mapsto u^p \in  L^{\gamma}(\mathbb{R}^n)
\]
with $\gamma$ defined by
\[
    \frac{1}{\gamma} = \frac{1}{q} + \frac{1}{r},\quad
    \frac{1}{q} = \begin{dcases}
    \omega &(2s \ge n),\\
    \frac{(p-1)(n-2s)}{2n}
    &(2s<n),
    \end{dcases}
\]
where $\omega > 0$ is sufficiently small so that
$q \ge 2$ and $r \le q(p-1)$
hold.
Indeed, the H\"{o}lder inequality implies
\[
	\| \mathcal{N}(u) \|_{L^{\gamma}}
	\lesssim \| u \|_{L^r} \| u \|_{L^{q(p-1)}}^{p-1}
	\lesssim \| u \|_{\dot{B}^0_{r,2}}
    ( \| u \|_{\dot{B}^0_{r,2}}
    + \| u \|_{\dot{B}^s_{2,2}} )^{p-1},
\]
where we have used the embeddings
$\dot{\mathcal{B}}^0_{r,2}(\mathbb{R}^n) \subset L^{r}(\mathbb{R}^n)$
and
$\dot{\mathcal{B}}^0_{q(p-1),2}(\mathbb{R}^n) \subset L^{q(p-1)}(\mathbb{R}^n)$,
and
$\dot{\mathcal{B}}^s_{2,2}(\mathbb{R}^n)\cap \dot{\mathcal{B}}^0_{r,2}(\mathbb{R}^n) \subset \dot{\mathcal{B}}^0_{q(p-1),2}(\mathbb{R}^n)$,
which follows from the Gagliardo--Nirenberg inequality.
In particular, we have $\mathcal{N}(u) \in L^{\gamma}(\mathbb{R}^n) \subset \mathcal{S}'_h(\mathbb{R}^n)$.
Moreover, in the same way, we also deduce
\[
    \| \mathcal{N}(u) - \mathcal{N}(v) \|_{L^{\gamma}}
    \lesssim
    \| u - v \|_{\dot{B}^0_{r,2}}
    ( \| u \|_{\dot{B}^0_{r,2}}
    + \| u \|_{\dot{B}^s_{2,2}}
    + \| v \|_{\dot{B}^0_{r,2}}
    + \| v \|_{\dot{B}^s_{2,2}}
    )^{p-1},
\]
which shows the continuity of $\mathcal{N}$.

Let $u \in X(T)$.
Then, $u$ is strongly measurable from
$[0,T]$
to
$\dot{\mathcal{B}}^s_{2,2}(\mathbb{R}^n)\cap \dot{\mathcal{B}}^0_{r,2}(\mathbb{R}^n)$.
Combining this and the above continuity of $\mathcal{N}$,
we see that
$\mathcal{N}(u (\cdot)) : [0,T] \to L^{\gamma}(\mathbb{R}^n)$
is strongly measurable.
%\footnote{
%By definition, $u(\cdot)$ is approximated by a sequence of
%step functions $u_n(\cdot)$,
%and hence,
%$\mathcal{N}(u(\cdot))$
%is approximated by the sequence of step functions
%$\mathcal{N}(u_n(\cdot))$.
%}.
Next, we claim that for any
$\psi \in \dot{\mathcal{B}}^0_{\sigma_1',2}(\mathbb{R}^n)$,
\begin{equation}\label{eq:app:C:psi}
    \langle \mathcal{N}(u(\cdot)), \psi \rangle_{\dot{\mathcal{B}}^0_{\sigma_1,2}, \dot{\mathcal{B}}^0_{\sigma_1',2}} :
    [0,T] \to \mathbb{R}
\end{equation}
is measurable, where $\sigma_1'$ is the H\"{o}lder conjugate of $\sigma_1$.
Namely, the function
$\mathcal{N}(u(\cdot))$
is weakly measurable from
$[0,T]$
to
$\dot{\mathcal{B}}^0_{\sigma_1,2}(\mathbb{R}^n)$.
Since $\mathcal{S}_{\ast}(\mathbb{R}^n)$ is dense in
$\dot{\mathcal{B}}^0_{\sigma_1',2}(\mathbb{R}^n)$,
there exists a sequence
$\{ \phi_j \}_{j=1}^{\infty}$
of $\mathcal{S}_{\ast}(\mathbb{R}^n)$
such that
$\lim_{j\to \infty} \phi_j = \psi$
in 
$\dot{\mathcal{B}}^0_{\sigma_1',2}(\mathbb{R}^n)$.
For each $\phi_j$, the above
measurability of
$\mathcal{N}(u (\cdot)) : [0,T] \to L^{\gamma}(\mathbb{R}^n)$
implies that
the function 
\[
    \langle \mathcal{N}(u(\cdot)), \phi_j \rangle_{L^{\gamma}, L^{\gamma'}} :
    [0,T] \to \mathbb{R}
\]
is measurable.
Now, recalling the first part of the proof of Lemma \ref{lem:nonlin},
we know that
$\mathcal{N}(u(t)) \in \dot{\mathcal{B}}^0_{\sigma_1,2}(\mathbb{R}^n)$
for almost every $t \in [0,T]$.
This implies that
\[
    \langle \mathcal{N}(u(\cdot)), \phi_j\rangle_{\dot{\mathcal{B}}^0_{\sigma_1,2}, \dot{\mathcal{B}}^0_{\sigma_1',2}} :
    [0,T] \to \mathbb{R}
\]
is measurable.
Finally, taking the limit $j \to \infty$,
we conclude that the function \eqref{eq:app:C:psi} is measurable.

Moreover, we remark that 
$\dot{\mathcal{B}}^0_{\sigma_1,2}(\mathbb{R}^n)$
is separable,
since
$\mathcal{S}(\mathbb{R}^n)$
is a separable metric space
(see \cite[p. 144]{ReSiI}),
$\mathcal{S}_{\ast}(\mathbb{R}^n)$
is its subspace,
and $\mathcal{S}_{\ast}(\mathbb{R}^n)$
is dense in
$\dot{\mathcal{B}}^0_{\sigma_1,2}(\mathbb{R}^n)$.
Therefore, by the Pettis theorem
\cite[Theorem 1.1.6]{HyNiVeWe},
we conclude that 
the function
$\mathcal{N}(u(\cdot))$
is strongly measurable from
$[0,T]$
to
$\dot{\mathcal{B}}^0_{\sigma_1,2}(\mathbb{R}^n)$.
This completes the proof.
\end{proof}

%%%%%%%%%%%%%%%%%%%%%%%%%%%%%%%%%%%%
\section{Proof of Lemma \ref{lem:XTM}} \label{app:E}
Since $X(T,M)$ is a closed subset of $X(T)$, it suffices to show that
$X(T)$ is complete.
Let $\{ \phi_n \}_{n \in \mathbb{N}} \subset X(T)$ be a Cauchy sequence.
Then, it is also the Cauchy sequence in
$L^{\infty} ( 0, T; \dot{\mathcal{B}}^s_{2,2}(\mathbb{R}^n) \cap \dot{\mathcal{B}}^0_{r,2}(\mathbb{R}^n) )$,
which is complete as stated in the introduction.
Thus, there exists a limit function
$\phi \in L^{\infty} ( 0, T; \dot{\mathcal{B}}^s_{2,2}(\mathbb{R}^n) \cap \dot{\mathcal{B}}^0_{r,2}(\mathbb{R}^n) )$.

It remains to prove $\phi \in X(T)$.
Let $M > 0$ be a constant such that
$\| \phi_n \|_{X(T)} \le M$ for all $n$.
Then, there exist zero sets $A_n \subset [0,T)$ for $n \in \mathbb{N}$ satisfying
\[
	\langle t \rangle^{\frac{s}{2}-\frac{n}{2}\left( \frac{1}{2} - \frac{1}{r} \right)} \| \phi_n(t) \|_{\dot{B}^s_{2,2}}
	+ \| \phi_n(t) \|_{\dot{B}^0_{r,2}} \le M
\]
for $t \in [0,T) \setminus A_n$.
Let $A = \bigcup_{n=1}^{\infty} A_n$, which is still a zero set.
Then, we have, for all $t \in [0,T) \setminus A$,
\begin{align}
	&\langle t \rangle^{\frac{s}{2}-\frac{n}{2}\left( \frac{1}{2} - \frac{1}{r} \right)} \| \phi (t) \|_{\dot{B}^s_{2,2}}
	+ \| \phi (t) \|_{\dot{B}^0_{r,2}} \\
	&=
	\lim_{n\to \infty}
	\left\{ \langle t \rangle^{\frac{s}{2}-\frac{n}{2}\left( \frac{1}{2} - \frac{1}{r} \right)} \| \phi_n(t) \|_{\dot{B}^s_{2,2}}
	+ \| \phi_n(t) \|_{\dot{B}^0_{r,2}} \right\} \\
	&\le \limsup_{n\to \infty} \| \phi_n \|_{X(T)} \\
	&\le M.
\end{align}
This implies $\| \phi \|_{X(T)} \le M$ and completes the proof.

%%%%%%%%%%%%%%%%%%%%%%%%%%%%%%%%%%%%%%%%

\section*{Acknowledgements}
The authors are grateful to
Professor Akitaka Matsumura for 
suggesting the problem.
The authors also extend their sincere gratitude
to Professor Mamoru Okamoto for the continuous discussions and for providing the fundamental ideas
for the proofs of several lemmas in this paper.
This work was supported by JSPS KAKENHI Grant Numbers
JP19K14581,
JP20K14346,
JP22H00097,
JP24K06811,
JP24K16947.

\end{document}